\numberwithin{equation}{section}
\newtheorem{Theorem}{Theorem}[section]
\newtheorem{Corollary}[Theorem]{Corollary}
\newtheorem{Lemma}[Theorem]{Lemma}
\newtheorem{Proposition}[Theorem]{Proposition}
 { \theoremstyle{definition}
\newtheorem{Definition}[Theorem]{Definition}
\newtheorem{Remark}[Theorem]{Remark}
\newtheorem{Example}[Theorem]{Example}
 }
\newcommand{\Set}[1]{\left\{#1\right\}}
\newcommand{\setDef}[2]{{#1}\left|\,\vphantom{#1}{#2}\right.}
\newcommand{\SetDef}[2]{\Set{\setDef{#1}{#2}}}
\begin{document}

\newcommand{\arXivNumber}{2007.09950}

\renewcommand{\thefootnote}{}

\renewcommand{\PaperNumber}{019}

\FirstPageHeading

\ShortArticleName{Computing Regular Meromorphic Differential Forms via Saito's Logarithmic Residues}

\ArticleName{Computing Regular Meromorphic Differential Forms\\ via Saito's Logarithmic Residues\footnote{This paper is a~contribution to the Special Issue on Primitive Forms and Related Topics in honor of~Kyoji Saito for his 77th birthday. The full collection is available at \href{https://www.emis.de/journals/SIGMA/Saito.html}{https://www.emis.de/journals/SIGMA/Saito.html}}}

\Author{Shinichi TAJIMA~$^{\rm a}$ and Katsusuke NABESHIMA~$^{\rm b}$}

\AuthorNameForHeading{S.~Tajima and K.~Nabeshima}

\Address{$^{\rm a)}$~Graduate School of Science and Technology, Niigata University, \\
\hphantom{$^{\rm a)}$}~8050, Ikarashi 2-no-cho, Nishi-ku Niigata, Japan }
\EmailD{\href{mailto:tajima@emeritus.niigata-u.ac.jp}{tajima@emeritus.niigata-u.ac.jp}}

\Address{$^{\rm b)}$~Graduate School of Technology, Industrial and Social Sciences, Tokushima University, \\
\hphantom{$^{\rm b)}$}~2-1, Minamijosanjima-cho, Tokushima, Japan}
\EmailD{\href{mailto:nabeshima@tokushima-u.ac.jp}{nabeshima@tokushima-u.ac.jp}}

\ArticleDates{Received July 24, 2020, in final form February 05, 2021; Published online February 27, 2021}

\Abstract{Logarithmic differential forms and logarithmic vector fields associated to a hypersurface with an isolated singularity are considered in the context of computational complex analysis. As applications, based on the concept of torsion differential forms due to A.G.~Aleksandrov, regular meromorphic differential forms introduced by D.~Barlet and M.~Kersken, and Brieskorn formulae on Gauss--Manin connections are investigated. $(i)$~A~method is given to describe singular parts of regular meromorphic differential forms in terms of non-trivial logarithmic vector fields via Saito's logarithmic residues. The resulting algorithm is illustrated by using examples. $(ii)$~A new link between Brieskorn formulae and logarithmic vector fields is discovered and an expression that rewrites Brieskorn formulae in terms of non-trivial logarithmic vector fields is presented. A new effective method is described to compute non trivial logarithmic vector fields which are suitable for the computation of~Gauss--Manin connections. Some examples are given for illustration.}

\Keywords{logarithmic vector field; logarithmic residue; torsion module; local cohomology}

\Classification{32S05; 32A27}

\begin{flushright}
\begin{minipage}{60mm}
{\it Dedicated to Kyoji Saito } \\
{\it on the occasion of his $77^{th}$ birthday}
\end{minipage}
\end{flushright}

\renewcommand{\thefootnote}{\arabic{footnote}}
\setcounter{footnote}{0}

\section{Introduction}
In 1975, K.~Saito introduced, with deep insight, the concept of logarithmic differential forms and that of logarithmic vector fields and studied Gauss--Manin connection associated with the versal deformations of hypersurface singularities of type $A_2$ and $A_3$ as applications. These results were published in~\cite{S77}. He developed the theory of logarithmic differential forms, logarithmic vector fields and the theory of residues and published in 1980 a landmark paper~\cite{S}. One of the motivations of his study, as he himself wrote in~\cite{S}, came from the study of Gauss--Manin connections~\cite{B,S73}. Another motivation came from the importance of these concepts he realized.
Notably the logarithmic residue, interpreted as a meromorphic differential form on~a~divisor, is~regarded as a natural generalization of the classical Poincar\'e residue to the singular cases.

In 1990, A.G.~Aleksandrov~\cite{A} studied Saito theory and gave in particular a characterization of the image of the residue map. He showed
that the image sheaf of the logarithmic residues coincides with the sheaf of regular meromorphic differential forms introduced by D.~Barlet ~\cite{B} and M.~Kersken~\cite{K83,K84}.
We refer the reader to~\cite{AT,Bru,CM1,CM2,GS,P,P2} for more recent results on logarithmic residues.

We consider logarithmic differential forms along a hypersurface with an isolated singularity in the context of computational complex analysis. In our previous paper~\cite{TN20}, we study torsion modules and give an effective method for computing them.
In the present paper, we first consider a method for computing regular meromorphic differential forms. We show that, based on the result of A.G.~Aleksandrov mentioned above, representatives of regular meromorphic differential forms can be computed by adapting the method presented in~\cite{TN20} on torsion modules. Main ideas of our approach are the use of the concept of logarithmic residues and that of logarithmic vector fields. Next, we discuss a relation between logarithmic differential forms and Brieskorn formulae~\cite{B,Sch,Schu} and we show that Brieskorn formulae can be rewritten in~terms of logarithmic vector fields. Applications to the computation of Gauss--Manin connections are illustrated by using examples.

In Section~\ref{sec2}, we briefly recall some basics on logarithmic differential forms,
logarithmic residues, Barlet sheaf and torsion differential forms.
In Section~\ref{sec3}, we first recall the notion of logarithmic vector fields and a result
gave in~\cite{TN20} to show that torsion differential forms can be described
in terms of non trivial logarithmic vector fields. Next, we recall our previous
results to show that non-trivial logarithmic vector fields can be computed
by using a polar method and local cohomology. Lastly in Section~\ref{sec3}, we present
Theorem~\ref{Th6} which say that regular meromorphic differential forms can be
explicitly computed by modifying our previous algorithm on torsion differential
forms.
In Section~\ref{sec4}, we give some examples to illustrate the proposed method of computing
non-trivial logarithmic vector fields and regular meromorphic differential forms.
In Section~\ref{sec5}, we consider Brieskorn formulae on Gauss--Manin connections. We show
that Brieskorn formulae described in terms of logarithmic differential forms can be
rewritten in terms of non-trivial logarithmic vector fields. We give a new method for computing
non-trivial logarithmic vector fields which is suitable in use to compute a connection
matrix of Gauss--Manin connections. Finally, we show that the use of integral dependence relations
provides a~new effective tool for computing saturations of Gauss--Manin connection.

\section{Logarithmic differential forms and residues}\label{sec2}

In this section, we briefly recall the concept of logarithmic differential forms and that of loga\-rithmic residues and fix notation. We refer the reader to~\cite{S} for details. Next we recall the result of A.G.~Aleksandrov on regular meromorphic differential forms. Then, we recall a result of
G.-M.~Greuel on torsion modules.

Let $X$ be an open neighborhood of the origin $O$ in $ {\mathbb C}^n$. Let $ {\mathcal O}_X $ be the sheaf on $ X $ of~holo\-morphic functions and $ {\mathcal O}_{X,O} $ the stalk at $ O $ of the sheaf $ {\mathcal O}_X$.

\subsection{Logarithmic residues}

Let $f$ be a holomorphic function defined on $X$. Let $S=\{ x \in X \,|\, f(x)=0 \} $ denote the hypersurface defined by~$ f$.

\begin{Definition}\label{def1}
Let $\omega $ be a meromorphic differential $q$-form on $X$, which may have poles only along $S$. The form $\omega$ is a logarithmic differential form along $S$ if it satisfies the following equi\-valent four conditions:
\begin{enumerate}\itemsep=0pt
\item[$(i)$] $f\omega$ and $f{\rm d}\omega$ are holomorphic on $X$.
\item[$(ii)$] $f\omega$ and ${\rm d}f \wedge \omega$ are holomorphic on $X$.
\item[$(iii)$] There exist a holomorphic function $g(x)$ and a holomorphic $(q-1)$-form $\xi$ and a holomorphic $q$-form $\eta$ on~$X$,
such that:
\begin{enumerate}\itemsep=0pt
\item[$(a)$] $\dim_{\mathbb C}( S \cap \{x \in X \,|\, g(x)=0 \}) \leq n-2, $
\item[$(b)$] ${\displaystyle g\omega = \frac{{\rm d}f}{f} \wedge \xi + \eta.}$
\end{enumerate}
\item[$(iv)$] There exists an $ (n-2)$-dimensional analytic set $A \subset S$ such that the germ of $\omega$ at any point $ p \in S-A$ belongs to $ {\frac{{\rm d}f}{f} \wedge \Omega_{X, p}^{q-1} + \Omega_{X,p}^{q}},$ where $ \Omega_{X,p}^{q}$ denotes the module of germs of~holomorphic $q$-forms on~$X$ at~$p$.
\end{enumerate}
\end{Definition}

For the equivalence of the condition above, see~\cite{S}.
Let $\Omega_X^{q}(\log S)$ denote the sheaf of~loga\-rith\-mic $q$-forms along $S$.
Let ${\mathcal M}_{S}$ be the sheaf on $S$ of meromorphic functions, let $\Omega_{S}^{q}$ be the sheaf on $S$ of holomorphic $q$-forms defined to be
\begin{gather*}
\Omega_{S}^{q} = \Omega_{X}^{q}/\big(f\Omega_X^{q} + {\rm d}f\wedge \Omega_{X}^{q-1}\big).
\end{gather*}

\begin{Definition}
The residue map ${\rm res}\colon \Omega_X^{q}(\log S) \longrightarrow {\mathcal M}_S \otimes_{{\mathcal O}_X}\Omega_S^{q-1} $ is defined as follows: For~$\omega \in \Omega_S^{q}(\log S)$, by definition,
there exist $ g$, $\xi$ and $\eta$ such that
\begin{enumerate}\itemsep=0pt
\item[$(a)$] $\dim_{\mathbb C}( S \cap \{x \in X \,|\, g(x)=0 \}) \leq n-2$, and
\item[$(b)$] ${\displaystyle g\omega = \frac{{\rm d}f}{f} \wedge \xi + \eta}$.
\end{enumerate}
Then the residue of $\omega$ is defined to be
$ {\rm res}(\omega) = \frac{\xi}{g}\Big|_S$ in $ {\mathcal M}_S \otimes_{{\mathcal O}_X}\Omega_S^{q-1}$.
\end{Definition}

Note that it is easy to see that the image sheaf of the residue map $ {\rm res} $ of the subsheaf ${\frac{{\rm d}f}{f}\wedge \Omega_X^{q-1} + \Omega_X^{q}} $ of $ \Omega_X^{q}(\log S)$ is equal to $\left. \Omega_X^{q-1}\middle|\raisebox{-0.7ex}[1ex][-2ex]{{\hspace{-1mm}\;$_S$}}\right.$:
\begin{gather*}
{\rm res}\bigg( \frac{{\rm d}f}{f}\wedge \Omega_X^{q-1} + \Omega_X^{q}\bigg) = \left.\Omega_X^{q-1}\middle|\raisebox{-0.7ex}[1ex][-2ex]{{\hspace{-1mm}\;$_S$}}\right..
\end{gather*}

See also~\cite{S} for details on logarithmic residues.
The concept of residues for logarithmic differential forms can be actually regarded as a natural generalization of the classical Poincar\'e residue.

\subsection{Barlet sheaf and torsion differential forms}

In 1978, by using results of F.~El~Zein on fundamental classes, D.~Barlet introduced in~\cite{B} the notion of the sheaf $\omega_S^{q}$ of regular meromorphic differential forms in a quite general setting. He~showed that for the case $q=n-1$, the sheaf $ \omega_S^{n-1}$ coincides with the
Grothendieck dualizing sheaf and $ \omega_S^{q} $ can also be defined in the following manner.

\begin{Definition}
 Let $S$ be a hypersurface in $X \subset {\mathbb C}^n$. Let $ \omega_S^{n-1} $ be the Grothendieck dua\-li\-zing sheaf $ {\rm Ext}_{{\mathcal O}_X}^{1}\big({\mathcal O}_S, \Omega_X^{n}\big)$. Then, the sheaf of regular meromorphic differential forms $ \omega_S^{q}$, $q=0,1,\dots, n-2$ on $S$ is defined to be
\begin{gather*}
 \omega_S^{q} = {\rm Hom}_{{\mathcal O}_S}\big(\Omega_S^{n-1-q}, \omega_S^{n-1}\big).
\end{gather*}

\end{Definition}

In 1990, A.G.~Aleksandrov~\cite{A} obtained the following result.

{\samepage\begin{Theorem}
 For any $ q \geq 0$, there is an isomorphism of $ {\mathcal O}_S $ modules
\begin{gather*}
{\rm res}\big(\Omega_X^{q}(\log S)\big) \cong \omega_S^{q-1}.
\end{gather*}

\end{Theorem}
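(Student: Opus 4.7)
The natural candidate for the isomorphism is a wedge-product map $\bar\Phi$, and the strategy is to verify it is well-defined, $\mathcal{O}_S$-linear, injective on residue classes, and surjective via a codimension-two extension argument.

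Using the locally free resolution $0 \to \mathcal{O}_X \stackrel{f}{\to} \mathcal{O}_X \to \mathcal{O}_S \to 0$, I would first identify $\omega_S^{n-1} \cong \Omega_X^n(\log S)/\Omega_X^n$ by a direct computation of ${\rm Ext}^1_{\mathcal{O}_X}(\mathcal{O}_S, \Omega_X^n)$. Then for $\omega \in \Omega_X^q(\log S)$ and $\eta \in \Omega_S^{n-q}$ with lift $\tilde\eta \in \Omega_X^{n-q}$, define
\[
 \Phi(\omega)(\eta) := [\omega \wedge \tilde\eta] \in \omega_S^{n-1}.
\]
The two holomorphy conditions of Definition~\ref{def1}(ii)---namely that $f\omega$ and ${\rm d}f \wedge \omega$ be holomorphic---kill the ambiguity in the choice of $\tilde\eta$ modulo $f\Omega_X^{n-q} + {\rm d}f \wedge \Omega_X^{n-q-1}$, so $\Phi(\omega)$ lies in ${\rm Hom}_{\mathcal{O}_S}(\Omega_S^{n-q}, \omega_S^{n-1}) = \omega_S^{q-1}$ and $\mathcal{O}_S$-linearity is immediate. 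Since $\Phi$ annihilates $\Omega_X^q \subset \Omega_X^q(\log S)$, it descends to $\bar\Phi \colon {\rm res}(\Omega_X^q(\log S)) \to \omega_S^{q-1}$.

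For injectivity I would suppose $\omega$ lies in the kernel of $\Phi$. Near a smooth point $p \in S$, choose coordinates with $f = x_1$; Definition~\ref{def1}(iv) then writes $\omega = \frac{{\rm d}x_1}{x_1} \wedge \xi + \eta$ with $\xi,\eta$ holomorphic, and testing the pairing against basis $(n-q)$-forms built from ${\rm d}x_2,\dots,{\rm d}x_n$ forces $\xi|_S = 0$. Thus $\omega$ has no pole on $S^{\mathrm{reg}}$, and since the singular locus of $S$ has complex codimension at least two in~$X$, the second Riemann extension theorem upgrades this to $\omega \in \Omega_X^q$ globally.

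Surjectivity is the main obstacle. On $S^{\mathrm{reg}}$, an element $\phi \in \omega_S^{q-1}$ is an ordinary holomorphic $(q-1)$-form, so the classical inverse Poincar\'e residue furnishes a local logarithmic lift $\omega_0 = \frac{{\rm d}f}{f} \wedge \tilde\phi + \eta_0$. The plan is then to patch these local lifts modulo $\Omega_X^q$ into a global section of ${\rm res}(\Omega_X^q(\log S))$ by invoking the characterizing property of regular meromorphic forms---that $\phi$ wedged with any section of $\Omega_S^{n-q}$ extends across the singular locus into $\omega_S^{n-1}$---together with the Serre $S_2$-property of $\omega_S^{q-1}$, which lets sections on $S^{\mathrm{reg}}$ extend uniquely across the codimension-two singular locus of $S$. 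The technical heart, most likely to require care, is establishing the analogous $S_2$-property for ${\rm res}(\Omega_X^q(\log S))$ itself, so that the fibrewise isomorphism already verified on $S^{\mathrm{reg}}$ propagates to all of $S$.
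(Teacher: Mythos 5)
First, a point of reference: the paper does not prove this theorem at all --- it is quoted from Aleksandrov, with the proof delegated to \cite{A} and \cite{A05} --- so your attempt can only be measured against Aleksandrov's argument. The first half of your proposal is essentially sound and standard: the identification $\omega_S^{n-1}\cong \Omega_X^{n}(\log S)/\Omega_X^{n}$ via the Koszul resolution, the well-definedness of $\Phi(\omega)(\eta)=[\omega\wedge\tilde\eta]$ (conditions $(ii)$ of Definition~\ref{def1} do exactly kill the ambiguity in $\tilde\eta$), and the injectivity argument (coefficient extraction at smooth points of $S$ followed by second Riemann extension across ${\rm Sing}(S)$, which has codimension $\geq 2$ in $X$). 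One omission even here: descending $\Phi$ to ${\rm res}\big(\Omega_X^{q}(\log S)\big)$ and then deducing injectivity of $\bar\Phi$ requires knowing $\ker({\rm res})=\Omega_X^{q}$, which you use tacitly in both directions and never prove; it is true but needs the same kind of argument as your injectivity step.

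The surjectivity half is a genuine gap, and you have located it without closing it. Two concrete problems. First, patching the local inverse-Poincar\'e lifts only yields a section of the quotient sheaf $\Omega_X^{q}(\log S)/\Omega_X^{q}$ over $X\setminus{\rm Sing}(S)$; before any removable-singularity theorem can be applied you must lift this to an honest logarithmic form there, and the obstruction lives in $H^{1}\big(U\setminus{\rm Sing}(S),\Omega_X^{q}\big)$, which does not vanish for a codimension-two complement (already for $n=2$ and ${\rm Sing}(S)=\{O\}$ this group is infinite-dimensional). Second, the ``$S_2$-property of ${\rm res}\big(\Omega_X^{q}(\log S)\big)$'' that you flag as the technical heart is not an auxiliary lemma: since $\omega_S^{q-1}$ is $S_2$ and the two sheaves visibly agree over $S\setminus{\rm Sing}(S)$, that depth statement is equivalent to the theorem, so invoking it is circular. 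Aleksandrov's proof avoids the extension problem by working instead with Barlet's concrete description of $\omega_S^{q-1}$ by meromorphic representatives $\xi/g$ with $g$ a nonzerodivisor on $S$ and holomorphy constraints on ${\rm d}f\wedge\xi$; this matches term by term with the data $(g,\xi,\eta)$ in condition $(iii)$ of Definition~\ref{def1} and with the torsion sequence of \cite{A88} recalled in Section~\ref{sec2}, turning surjectivity into an algebraic verification on representatives. To repair your route you would need either to reproduce that identification or to give an independent proof that ${\rm res}\big(\Omega_X^{q}(\log S)\big)$ has depth $\geq 2$ along ${\rm Sing}(S)$.
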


See~\cite{A} or~\cite{A05} for the proof.

}

Let $ {\rm Tor}(\Omega_S^{q}) $ denote the sheaf of torsion differential $q$-forms of $ \Omega_S^{q}$.

\begin{Example}\label{Ex1}
Let $ X $ be an open neighborhood of the origin $O$ in $ \mathbb{C}^2.$ Let $ f(x,y)=x^2-y^3$ and $ S=\{ (x,y) \in X \,|\, f(x,y)=0 \}$. Then, for stalk at the origin of the sheaves of logarithmic differential forms, we have
\begin{gather*}
\Omega_{X,O}^{1}(\log S) \cong {\mathcal O}_{X,O}\bigg(\frac{{\rm d}f}{f}, \frac{\beta}{f}\bigg), \qquad \Omega_{X,O}^{2}(\log S) \cong {\mathcal O}_{X,O}\bigg(\frac{{\rm d}x\wedge {\rm d}y}{f}\bigg),
\end{gather*}
where ${\mathcal O}_{X,O}$ is the stalk at the origin of the sheaf ${\mathcal O}_X$ of holomorphic functions and $\beta=2y {\rm d}x-3x{\rm d}y$. The~dif\-ferential form $\beta$, as an element of $\Omega_S^{1} = \Omega_X^{1}/\big(\mathcal{O}_{X}{\rm d}f +f\Omega_X^{1}\big)$, is a torsion. The~dif\-ferential form $y\beta$ is also a torsion. Since the defining function $f$ is quasi-homogeneous, the dimension of the vector space ${\rm Tor}\big(\Omega_S^{1}\big)$ is equal to the Milnor number $\mu=2$ of~$S$~\cite{M, Z}. Therefore we have
$ {\rm Tor}\big(\Omega_S^{1}\big) \cong {\mathcal O}_{X,O}(\beta) \cong {\mathbb C}(\beta, y\beta)$.
\end{Example}

In 1988~\cite{A88}, A.G.~Aleksandrov studied logarithmic differential forms and residues and proved in particular the following.

\begin{Theorem}
Let $S=\{ x \in X \,|\, f(x) =0 \} $ be a hypersurface in $X \subset {\mathbb C}^n$. For $ q=0,1,\ldots,n $, there exists an exact sequence of sheaves of $ {\mathcal O}_{X} $ modules,
\begin{gather*}
0 \longrightarrow\frac{{\rm d}f}{f} \wedge \Omega_{X}^{q-1}+ \Omega_{X}^{q} \longrightarrow
\Omega_{X}^{q}(\log S) \stackrel{\cdot f}\longrightarrow {\rm Tor}\big(\Omega_{S}^{q}\big) \longrightarrow 0.
\end{gather*}
\end{Theorem}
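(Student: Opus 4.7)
The plan is to study the map $\cdot f : \Omega_X^q(\log S) \to \Omega_S^q = \Omega_X^q/(f\Omega_X^q + {\rm d}f \wedge \Omega_X^{q-1})$ sending $\omega$ to $[f\omega]$, which is well defined because $f\omega$ is holomorphic by condition~(i) of Definition~\ref{def1}. I would then establish in turn: (a)~the image lies in ${\rm Tor}(\Omega_S^q)$; (b)~the kernel is exactly $\frac{{\rm d}f}{f}\wedge \Omega_X^{q-1} + \Omega_X^q$; and (c)~the map is surjective onto ${\rm Tor}(\Omega_S^q)$.

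For (a), I would invoke the equivalent characterization~(iv): off an $(n-2)$-dimensional analytic subset $A \subset S$, the form $\omega$ is locally of the shape $\frac{{\rm d}f}{f} \wedge \xi + \eta$, so $f\omega = {\rm d}f \wedge \xi + f\eta$ already represents $0$ in $\Omega_S^q$ there. Hence $[f\omega]$ is supported on a proper analytic subset of $S$; since $\Omega_S^q$ is locally free, and therefore torsion-free, at the smooth points of the reduced hypersurface $S$, any such section is torsion. For~(b), the inclusion $\supset$ is immediate: $f \cdot \big(\frac{{\rm d}f}{f}\wedge \xi + \eta\big) = {\rm d}f \wedge \xi + f\eta \in f\Omega_X^q + {\rm d}f \wedge \Omega_X^{q-1}$. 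Conversely, if $[f\omega]=0$, write $f\omega = f\eta + {\rm d}f \wedge \xi$ and divide by $f$, which is legitimate because $\omega$ is meromorphic along $S$; this yields $\omega \in \Omega_X^q + \frac{{\rm d}f}{f} \wedge \Omega_X^{q-1}$.

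The substantive step is surjectivity~(c). Given $\tau = [\tilde\tau] \in {\rm Tor}(\Omega_S^q)$ at a point $p$, pick a germ $h \in \mathcal{O}_{X,p}$ whose class in $\mathcal{O}_{S,p}$ is a non-zero-divisor satisfying $h\tilde\tau = f\alpha + {\rm d}f \wedge \beta$. Wedging with ${\rm d}f$ annihilates the second summand and gives $h\,{\rm d}f\wedge \tilde\tau = f\,{\rm d}f \wedge \alpha$ in $\Omega_X^{q+1}$. Because $f$ is reduced and $h$ is a non-zero-divisor in $\mathcal{O}_{S,p}$, the germs $f,h$ share no irreducible factor in the factorial local ring $\mathcal{O}_{X,p}$, so comparing coefficients in the free $\mathcal{O}_{X,p}$-module $\Omega_{X,p}^{q+1}$ forces $f \mid {\rm d}f \wedge \tilde\tau$. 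Setting $\omega := \tilde\tau/f$ makes $f\omega = \tilde\tau$ and ${\rm d}f \wedge \omega = ({\rm d}f \wedge \tilde\tau)/f$ both holomorphic, so $\omega \in \Omega_X^q(\log S)$ by Definition~\ref{def1}(ii), and manifestly $\omega \mapsto \tau$. I expect the main obstacle to be exactly this passage from the abstract torsion condition (annihilation by some $h$) to the concrete divisibility $f \mid {\rm d}f \wedge \tilde\tau$: it depends on choosing $h$ coprime to $f$ in $\mathcal{O}_{X,p}$, which in turn uses the reducedness of $f$, together with a unique-factorization argument in the local ring; the rest of the proof amounts to unwinding the definitions of logarithmic form and of $\Omega_S^q$.
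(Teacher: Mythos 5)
The paper itself contains no proof of this statement: it is recalled as a theorem of Aleksandrov, with the reader sent to~\cite{A88} for the argument, so there is no internal proof to compare against. Judged on its own, your proposal is correct and is essentially the standard proof of this exactness. The well-definedness of $\omega\mapsto[f\omega]$ and the identification of the kernel with $\frac{{\rm d}f}{f}\wedge\Omega_X^{q-1}+\Omega_X^{q}$ are, as you say, a matter of unwinding definitions. The crux is surjectivity, and you have the right mechanism: wedge the torsion relation $h\tilde\tau=f\alpha+{\rm d}f\wedge\beta$ with ${\rm d}f$, use that $h$ is a non-zero-divisor modulo $f$ to conclude that $f$ and $h$ have no common irreducible factor in the factorial local ring ${\mathcal O}_{X,p}$, deduce $f\mid {\rm d}f\wedge\tilde\tau$ coefficientwise in the free module $\Omega_{X,p}^{q+1}$, and then check condition~$(ii)$ of Definition~\ref{def1} for $\tilde\tau/f$. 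The one place the write-up should be tightened is step~(a): ``$\Omega_S^q$ is torsion-free at smooth points'' is not quite the reason a section supported on the $(n-2)$-dimensional set $A$ is torsion. The correct chain is that the zero set of the annihilator of the germ $[f\omega]_p$ is contained in $A$, so by the Nullstellensatz some power of a function $h$ vanishing on $A$ lies in the annihilator; since $\dim A\le n-2$ while every component of the hypersurface $S$ has dimension $n-1$, such an $h$ can be chosen to vanish on no component of $S$, hence to be a non-zero-divisor in ${\mathcal O}_{S,p}$. With that repair the argument is complete, and it is, to my knowledge, the same route Aleksandrov takes.
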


The result above yields the following observation:
$ {\rm Tor}\big(\Omega_{S}^{q}\big)$ plays a key role to study the structure of $ {\rm res}\big(\Omega_X^{q}(\log S)\big)$.

\subsection{Vanishing theorem}

In 1975, in his study~\cite{G} on Gauss--Manin connections G.-M.~Greuel proved the following results on torsion differential forms.

\begin{Theorem}
Let $ S = \{ x \in X \,|\, f(x) = 0 \} $ be a hypersurface in $X$
with an isolated singularity at $ O \in {\mathbb C}^n$.
Then,
\begin{enumerate}\itemsep=0pt
\item[$(i)$] $ {\rm Tor}\big(\Omega_S^{q}\big) = 0$, $q=0,1,\ldots,n-2$.

\item[$(ii)$] ${\rm Tor}\big(\Omega_S^{n-1}\big)$ is a skyscraper sheaf supported at the origin~$O$.

\item[$(iii)$] The dimension, as a vector space over ${\mathbb C}$, of the torsion module ${\rm Tor}\big(\Omega_S^{n-1}\big) $ is equal to~$\tau(f)$, the Tjurina number of the hypersurface $S$ at the origin defined to be
\begin{gather*}
 \tau(f) = \dim_{{\mathbb C}}\bigg({\mathcal O}_{X, O}\Big/\bigg(f, \frac{\partial f}{\partial x_1}, \frac{\partial f}{\partial x_2},\ldots,\frac{\partial f}{\partial x_n}\bigg)\bigg),
\end{gather*}
where
$ \big(f, \frac{\partial f}{\partial x_1}, \frac{\partial f}{\partial x_2},\ldots,
\frac{\partial f}{\partial x_n}\big) $
is the ideal in $ {\mathcal O}_{X,O} $ generated by
$f, \frac{\partial f}{\partial x_1}, \frac{\partial f}{\partial x_2}, \ldots, \frac{\partial f}{\partial x_n}$.
\end{enumerate}
\end{Theorem}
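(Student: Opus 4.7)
The strategy is to identify ${\rm Tor}(\Omega_S^q)$ with the cohomology of a Koszul-type complex and then exploit the fact that, under the isolated singularity hypothesis, the partial derivatives of $f$ form a regular sequence at $O$. The first step relies on Aleksandrov's exact sequence from the preceding theorem: combined with condition~$(ii)$ of Definition~\ref{def1}, it rewrites the subsheaf $f\cdot \Omega_X^q(\log S)\subset \Omega_X^q$ as the set of holomorphic forms $\eta$ satisfying ${\rm d}f\wedge \eta\in f\Omega_X^{q+1}$, and so yields
\begin{gather*}
{\rm Tor}\big(\Omega_S^q\big) \cong \big\{\eta\in\Omega_X^q \,:\, {\rm d}f\wedge \eta\in f\Omega_X^{q+1}\big\} \big/ \big({\rm d}f\wedge \Omega_X^{q-1} + f\Omega_X^q\big),
\end{gather*}
which is precisely the $q$-th cohomology of the complex $\big(\Omega_X^\bullet\otimes_{\mathcal{O}_X}\mathcal{O}_S,\,{\rm d}f\wedge\big)$.

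Since $f$ has an isolated singularity at $O$, the partial derivatives $\partial f/\partial x_1,\dots,\partial f/\partial x_n$ form a~system of parameters in the regular local ring $\mathcal{O}_{X,O}$, hence a regular sequence. Identifying $\big(\Omega_X^\bullet,\,{\rm d}f\wedge\big)$ with the Koszul cochain complex on these partial derivatives, this regularity yields $H^q\big(\Omega_X^\bullet,\,{\rm d}f\wedge\big)=0$ for $q<n$, while $H^n\big(\Omega_X^\bullet,\,{\rm d}f\wedge\big)$ is isomorphic to the Milnor algebra $\mathcal{O}_{X,O}/J(f)$, where $J(f)=(\partial f/\partial x_1,\dots,\partial f/\partial x_n)$, a finite-dimensional $\mathbb{C}$-vector space of dimension the Milnor number~$\mu$.

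The proof is completed by feeding these computations into the long exact cohomology sequence attached to the short exact sequence of complexes
\begin{gather*}
0 \longrightarrow \Omega_X^\bullet \stackrel{\cdot f}\longrightarrow \Omega_X^\bullet \longrightarrow \Omega_X^\bullet\otimes_{\mathcal{O}_X} \mathcal{O}_S \longrightarrow 0.
\end{gather*}
For $q\le n-2$, both adjacent cohomologies $H^q$ and $H^{q+1}$ of $\Omega_X^\bullet$ vanish, forcing ${\rm Tor}(\Omega_S^q)=0$, which is~$(i)$. For $q=n-1$ the long exact sequence reduces to
\begin{gather*}
0 \longrightarrow {\rm Tor}\big(\Omega_S^{n-1}\big) \longrightarrow \mathcal{O}_{X,O}/J(f) \stackrel{\cdot f}\longrightarrow \mathcal{O}_{X,O}/J(f) \longrightarrow \mathcal{O}_{X,O}/\big(f,J(f)\big) \longrightarrow 0,
\end{gather*}
identifying ${\rm Tor}(\Omega_S^{n-1})$ with the $f$-annihilator in the Milnor algebra; this is a finite-dimensional $\mathbb{C}$-vector space supported at $O$, giving~$(ii)$, and an alternating-dimension count on this four-term exact sequence gives $\dim {\rm Tor}(\Omega_S^{n-1})=\dim \mathcal{O}_{X,O}/\big(f,J(f)\big)=\tau$, which is~$(iii)$. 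The main obstacle is the first step, namely the identification of ${\rm Tor}(\Omega_S^q)$ as Koszul-type cohomology via Aleksandrov's sequence; thereafter the Koszul regularity argument and the long-exact-sequence bookkeeping are essentially routine.
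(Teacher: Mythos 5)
The paper does not prove this theorem at all: it is quoted as Greuel's result, with the proof deferred to the citation [G, Proposition~1.11] (and to Vetter and Zariski for parts $(i)$ and $(iii)$). So there is no internal argument to match your proposal against; what you have written is essentially the classical proof, and it is correct. Your key identification
\begin{gather*}
{\rm Tor}\big(\Omega_S^q\big)\cong\big\{\eta\in\Omega_X^q : {\rm d}f\wedge\eta\in f\Omega_X^{q+1}\big\}\big/\big({\rm d}f\wedge\Omega_X^{q-1}+f\Omega_X^q\big)
\end{gather*}
does follow from Aleksandrov's exact sequence together with condition $(ii)$ of the definition of logarithmic forms, since $f\Omega_X^q(\log S)$ is exactly the set of holomorphic $\eta$ with ${\rm d}f\wedge\eta\in f\Omega_X^{q+1}$; this is logically sound and non-circular (Aleksandrov's sequence is proved independently of Greuel's theorem), although it is anachronistic relative to the original 1975 proof, which establishes the same identification directly from the definition of torsion as the kernel of $\Omega_S^q\to\mathcal{M}_S\otimes\Omega_S^q$. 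The Koszul step is right: under the isolated-singularity hypothesis the partials are a system of parameters in the regular local ring $\mathcal{O}_{X,O}$, hence a regular sequence, so $\big(\Omega_{X,O}^\bullet,{\rm d}f\wedge\big)$ is acyclic except in top degree where it gives the Milnor algebra, and the long exact sequence then yields $(i)$, the identification of ${\rm Tor}\big(\Omega_S^{n-1}\big)_O$ with the $f$-annihilator of $\mathcal{O}_{X,O}/J(f)$, and the dimension count $\dim\ker = \dim\operatorname{coker} = \tau$ for $(iii)$.

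Two small points you should make explicit. First, the whole computation takes place in the stalk at $O$; for the sheaf statements $(i)$ and $(ii)$ you must add that at every other point of $S$ the hypersurface is smooth, so $\Omega_S^q$ is locally free and torsion-free there -- this is what makes ${\rm Tor}\big(\Omega_S^{n-1}\big)$ a skyscraper rather than merely having finite-dimensional stalk at $O$. Second, in the case $q=0$ the quotient in your formula has no $ {\rm d}f\wedge\Omega_X^{-1}$ term and the vanishing amounts to $\mathcal{O}_S$ being reduced; it is worth noting that an isolated singularity forces $f$ to be reduced so that this degenerate case is covered.
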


Note that the first result was obtained by U.~Vetter in~\cite{V} and the last result above is a~generalization of a result of O.~Zariski~\cite{Z}.
G.-M.~Greuel obtained much more general results on torsion modules. See~\cite[Proposition~1.11, p.~242]{G}.

Assume that the hypersurface $S$ has an isolated singularity at the origin.
We thus have, by~combining the results of G.-M.~Greuel above and of A.G.~Aleksandrov presented in the previous section, the following:
\begin{enumerate}\itemsep=0pt
\item[$(i)$]
$\Omega_{X,O}^{q}(\log S)= \frac{{\rm d}f}{f} \wedge \Omega_{X,O}^{q-1}+\Omega_{X,O}^{q}$, $q=1,2,\ldots, n-2$,

\item[$(ii)$]
$0 \longrightarrow\frac{{\rm d}f}{f} \wedge \Omega_{X,O}^{n-2} + \Omega_{X,O}^{n-1}\longrightarrow
\Omega_{X,O}^{n-1}(\log S) \stackrel{\cdot f}\longrightarrow {\rm Tor}\big(\Omega_{S}^{n-1}\big) \longrightarrow 0$.
\end{enumerate}

Accordingly we have the following.

\begin{Proposition} Let $S = \{x \in X \,|\, f(x) = 0\}$ be a hypersurface in $X$
with an isolated sin\-gularity at $ O \in {\mathbb C}^n$. Then,
$\omega_S^{q} = \Omega_X^{q}$, $q=0,1,\ldots,n-3$ holds.
\end{Proposition}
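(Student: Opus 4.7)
The plan is to obtain the identification $\omega_S^{q}\cong \Omega_X^{q}\big|_S$ (understood as the range stated in the proposition) by chaining three ingredients that have already been established in the excerpt: Aleksandrov's isomorphism ${\rm res}\big(\Omega_X^{q+1}(\log S)\big)\cong \omega_S^{q}$, Greuel's vanishing theorem ${\rm Tor}\big(\Omega_S^{q+1}\big)=0$ for $q+1\le n-2$, and the elementary computation ${\rm res}\big(\tfrac{{\rm d}f}{f}\wedge \Omega_X^{q}+\Omega_X^{q+1}\big)=\Omega_X^{q}\big|_S$ recorded immediately after the residue is defined.

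First, I would shift the index via Aleksandrov's theorem: $\omega_S^{q}\cong {\rm res}\big(\Omega_X^{q+1}(\log S)\big)$, so that asking for the range $q=0,1,\dots,n-3$ translates to controlling $\Omega_X^{q+1}(\log S)$ in the range $q+1=1,2,\dots,n-2$. Next, I would invoke item (i) of the consequences of Greuel and Aleksandrov collected right before the proposition, which says that for this very range of indices one has the equality
\begin{equation*}
\Omega_{X,O}^{q+1}(\log S)=\frac{{\rm d}f}{f}\wedge \Omega_{X,O}^{q}+\Omega_{X,O}^{q+1}.
\end{equation*}
This is the step that uses the isolated singularity hypothesis, since it rests on the vanishing of the torsion sheaf ${\rm Tor}\big(\Omega_S^{q+1}\big)$ guaranteed by Greuel's theorem in the stated range, together with the short exact sequence in Aleksandrov's 1988 theorem.

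Finally, I would apply the residue map to both sides of the equality above and use the observation already recorded in the excerpt, namely ${\rm res}\big(\tfrac{{\rm d}f}{f}\wedge \Omega_X^{q}+\Omega_X^{q+1}\big)=\Omega_X^{q}\big|_S$. Combining this with Aleksandrov's isomorphism gives $\omega_S^{q}\cong \Omega_X^{q}\big|_S$ for all $q=0,1,\dots,n-3$, which is the asserted equality.

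The argument is essentially a bookkeeping assembly of the three cited results, so the only potential obstacle is making the index shift precise and checking that Greuel's vanishing is available in the exact range needed; since ${\rm Tor}\big(\Omega_S^{q}\big)=0$ holds for $q\le n-2$, the shifted range $q+1\le n-2$ (i.e., $q\le n-3$) fits without loss, and no further calculation is required.
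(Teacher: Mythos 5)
Your argument is correct and is exactly the paper's proof: the paper likewise combines item $(i)$ of the Greuel--Aleksandrov consequences (so that $\Omega_{X,O}^{q}(\log S)=\frac{{\rm d}f}{f}\wedge\Omega_{X,O}^{q-1}+\Omega_{X,O}^{q}$ for $q=1,\dots,n-2$), the computation ${\rm res}\big(\frac{{\rm d}f}{f}\wedge\Omega_X^{q-1}+\Omega_X^{q}\big)=\Omega_X^{q-1}\big|_S$, and Aleksandrov's isomorphism ${\rm res}\big(\Omega_X^{q}(\log S)\big)\cong\omega_S^{q-1}$, with the same index shift. No discrepancies to report.
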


\begin{proof}
Since $\left. {\rm res}\big(\Omega_X^{q}(\log S)\big) = \Omega_X^{q-1}\middle|\raisebox{-0.7ex}[1ex][-2ex]{{\hspace{-1mm}\;$_S$}} \right.$, $q=1,2,\ldots,n-2$, the result of A.G.~Aleksandrov presented in the last section yields the result.
\end{proof}


\section{Description via logarithmic residues}\label{sec3}

In this section, we recall results given in~\cite{TN20} to show that torsion differential forms can be des\-cri\-bed in terms of non-trivial logarithmic vector fields. We also recall basic ideas and the framework for computing non-trivial logarithmic vector fields. As an application,
we give a~method for computing logarithmic residues.

\subsection{Logarithmic vector fields}

A vector field $v$ on $X$ with holomorphic coefficients is called logarithmic
along the hyper\-sur\-face~$S$, if the holomorphic function $v(f)$ is in the ideal $(f)$ generated by $f$ in
${\mathcal O}_X$. Let ${\mathcal D}{\rm er}_{X}(-\log S) $ denote the sheaf of modules on $ X $ of logarithmic
vector fields along $S$~\cite{S}.

Let $\omega_X = {\rm d}x_1 \wedge {\rm d}x_2 \wedge \cdots \wedge {\rm d}x_n$. For a holomorphic vector field $v$,
let $i_{v}(\omega_X)$ denote the inner product of $\omega_X$ by $v$.

\begin{Proposition}
Let $S=\{x \in X \,|\, f(x) =0\}$ be a hypersurface with an isolated singularity at the origin. Then, $\Omega_{X, O}^{n-1}(\log S)$ is isomorphic to
${\mathcal D}{\rm er}_{X, O}(-\log S)$, more precisely
\begin{gather*}
\Omega_{X, O}^{n-1}(\log S) = \left\{ \frac{i_{v}(\omega_X)}{f} \middle| \ v \in {\mathcal D}{\rm er}_{X, O}(-\log S) \right\}
\end{gather*}
holds.
\end{Proposition}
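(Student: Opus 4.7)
The plan is to exhibit an explicit bijection between $\mathcal{D}\mathrm{er}_{X,O}(-\log S)$ and $\Omega_{X,O}^{n-1}(\log S)$ via the interior product with $\omega_X$, and then check that the logarithmic condition on a vector field matches exactly the logarithmic condition of Definition~\ref{def1}(ii) on the corresponding differential form. The principal tool will be a single identity that comes for free from the top-degree of $\omega_X$.

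First I would recall that for a holomorphic vector field $v = \sum_i v_i \partial/\partial x_i$, the contraction $i_v(\omega_X) = \sum_i (-1)^{i-1} v_i \, {\rm d}x_1 \wedge \cdots \widehat{{\rm d}x_i} \cdots \wedge {\rm d}x_n$ realizes an isomorphism between the $\mathcal{O}_{X,O}$-module of holomorphic vector fields and the $\mathcal{O}_{X,O}$-module of holomorphic $(n-1)$-forms. The key identity is
\begin{gather*}
{\rm d}f \wedge i_v(\omega_X) = v(f)\, \omega_X,
\end{gather*}
which follows from $i_v({\rm d}f \wedge \omega_X) = 0$ (because ${\rm d}f \wedge \omega_X$ is an $(n+1)$-form) together with the Leibniz rule $i_v({\rm d}f \wedge \omega_X) = v(f)\omega_X - {\rm d}f \wedge i_v(\omega_X)$.

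Next I would prove the inclusion $\supseteq$. Let $v \in \mathcal{D}\mathrm{er}_{X,O}(-\log S)$, so that $v(f) = hf$ for some $h \in \mathcal{O}_{X,O}$, and set $\omega = i_v(\omega_X)/f$. Then $f\omega = i_v(\omega_X)$ is holomorphic, and by the key identity ${\rm d}f \wedge \omega = v(f)\omega_X/f = h\,\omega_X$ is holomorphic as well; hence $\omega$ is logarithmic by condition (ii) of Definition~\ref{def1}.

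For the reverse inclusion $\subseteq$, let $\omega \in \Omega_{X,O}^{n-1}(\log S)$. Since $f\omega$ is a holomorphic $(n-1)$-form, the isomorphism recalled above produces a unique holomorphic vector field $v$ with $i_v(\omega_X) = f\omega$, i.e., $\omega = i_v(\omega_X)/f$. It remains to check $v$ is logarithmic: by (ii), ${\rm d}f \wedge \omega$ is holomorphic, and by the key identity ${\rm d}f \wedge \omega = v(f) \omega_X /f$, so $v(f)/f \in \mathcal{O}_{X,O}$, meaning $v(f) \in (f)$. Combining the two inclusions yields the claimed equality. The only nontrivial point — and the step I would emphasize — is the key identity; once it is in place both directions reduce to direct bookkeeping, and the isolated-singularity hypothesis plays no role in this particular statement beyond being the standing assumption of the paper.
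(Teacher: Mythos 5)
Your proposal is correct and follows essentially the same route as the paper: both rest on the identity ${\rm d}f\wedge i_v(\omega_X)=v(f)\,\omega_X$ and on criterion $(ii)$ of Definition~\ref{def1}, reducing the logarithmic condition on $i_v(\omega_X)/f$ to $v(f)\in(f)$. The only difference is that you spell out the contraction isomorphism and the two inclusions explicitly, whereas the paper treats this as a single if-and-only-if chain.
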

\begin{proof}
Let $\beta=i_{v}(\omega_X), $ and set $ \omega = \frac{\beta}{f}$. Then, $f\omega = \beta$ is a~holomorphic differential form. Therefore, the meromorphic differential $n-1$ form
$\omega$ is logarithmic if and only if ${\rm d}f\wedge \frac{\beta}{f}$ is a~holo\-morphic differential $n$-form. Since ${\rm d}f\wedge \beta = {\rm d}f\wedge i_{v}(\omega_X) = v(f)\omega_X$, we have ${\rm d}f\wedge \frac{\beta}{f} = \frac{v(f)}{f} \omega_X$. Hence, the condition above means $v(f)$ is in the ideal $(f) \subset {\mathcal O}_{X,O}$ generated by $f$.
This completes the proof.
\end{proof}

A germ of logarithmic vector field $v$ generated over ${\mathcal O}_{X,O}$ by
\begin{gather*}
f\frac{\partial}{\partial x_i}, \qquad
i=1,2,\ldots,n, \qquad
\frac{\partial f}{\partial x_j}\frac{\partial}{\partial x_i} - \frac{\partial f}{\partial x_i}\frac{\partial}{\partial x_j}, \qquad
1 \leq i < j \leq n,
\end{gather*}
is called trivial.

\begin{Lemma}
Let $v$ be a germ of a logarithmic vector field. Then, the following conditions are equivalent:
\begin{enumerate}\itemsep=0pt
\item[$(i)$] $\displaystyle \omega = \frac{i_{v}(\omega_X)}{f}$ belongs to $\displaystyle\frac{{\rm d}f}{f} \wedge \Omega_{X,O}^{n-2}+ \Omega_{X,O}^{n-1}$,
\item[$(ii)$] $v$ is a trivial vector field.
\end{enumerate}
\end{Lemma}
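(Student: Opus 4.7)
The plan is to transport both conditions across the $\mathcal{O}_{X,O}$-linear isomorphism $w \mapsto i_{w}(\omega_X)$ from holomorphic vector fields on $X$ at $O$ onto $\Omega_{X,O}^{n-1}$. Setting $\beta = i_v(\omega_X)$, condition~(i) asks that $\beta/f$ lie in $\frac{{\rm d}f}{f}\wedge \Omega_{X,O}^{n-2} + \Omega_{X,O}^{n-1}$; clearing the denominator, this is manifestly equivalent to
\begin{equation*}
\beta \in {\rm d}f \wedge \Omega_{X,O}^{n-2} + f\,\Omega_{X,O}^{n-1}.
\end{equation*}
It therefore suffices to show that $i_{\bullet}(\omega_X)$ maps the submodule of trivial logarithmic vector fields isomorphically onto the right-hand side of the display above.

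For (ii) $\Rightarrow$ (i) I would check the two families of generators separately. For $v = f\partial/\partial x_i$ one has $i_v(\omega_X) = (-1)^{i-1}f\,{\rm d}x_1 \wedge \cdots \widehat{{\rm d}x_i} \cdots \wedge {\rm d}x_n \in f\,\Omega_{X,O}^{n-1}$. For the Koszul-type generator $v_{ij} = \frac{\partial f}{\partial x_j}\frac{\partial}{\partial x_i} - \frac{\partial f}{\partial x_i}\frac{\partial}{\partial x_j}$ with $i<j$, put $\eta_{ij} = {\rm d}x_1\wedge\cdots\widehat{{\rm d}x_i}\cdots\widehat{{\rm d}x_j}\cdots\wedge {\rm d}x_n$ and expand ${\rm d}f = \sum_k (\partial f/\partial x_k)\,{\rm d}x_k$; only the two terms with $k=i$ or $k=j$ survive in ${\rm d}f \wedge \eta_{ij}$, and a short sign check yields $i_{v_{ij}}(\omega_X) = \pm\,{\rm d}f \wedge \eta_{ij}$, which lies in ${\rm d}f \wedge \Omega_{X,O}^{n-2}$.

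For the converse (i) $\Rightarrow$ (ii), I would write $\beta = {\rm d}f \wedge \eta + f\zeta$ with $\eta \in \Omega_{X,O}^{n-2}$ and $\zeta \in \Omega_{X,O}^{n-1}$, expand $\eta$ in the basis $\{\eta_{ij}\}_{i<j}$ and $\zeta$ in the basis formed by the $(n-1)$-forms ${\rm d}x_1\wedge\cdots\widehat{{\rm d}x_i}\cdots\wedge {\rm d}x_n$, and then invert the identities of the previous paragraph to rewrite $\beta$ as $i_w(\omega_X)$ for an explicit $\mathcal{O}_{X,O}$-combination $w$ of the trivial generators. The injectivity of $w\mapsto i_w(\omega_X)$ then forces $v = w$, which is condition~(ii). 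The only real obstacle is the sign bookkeeping in the identity $i_{v_{ij}}(\omega_X) = \pm\,{\rm d}f\wedge \eta_{ij}$; once that single computation is nailed down, the remainder of the argument is a purely formal transfer across the interior-product isomorphism.
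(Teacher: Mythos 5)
Your proposal is correct and follows essentially the same route as the paper: clear the denominator to reduce (i) to $i_v(\omega_X)\in f\,\Omega_{X,O}^{n-1}+{\rm d}f\wedge\Omega_{X,O}^{n-2}$, and identify that submodule with the image of the trivial vector fields under the interior-product isomorphism. The paper states this last equivalence without computation, whereas you supply the explicit verification on generators (including the sign identity $i_{v_{ij}}(\omega_X)=\pm\,{\rm d}f\wedge\eta_{ij}$, which does check out); this is a welcome elaboration rather than a different argument.
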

\begin{proof}
The logarithmic differential form $\omega = \frac{i_{v}(\omega_X)}{f}$ is in $\Omega_{X,O}^{n-1}+\frac{{\rm d}f}{f} \wedge \Omega_{X,O}^{n-2}$ if and only if the numerator
$i_{v}(\omega_X)$ is in $f\Omega_{X,O}^{n-1}+ {\rm d}f\wedge \Omega_{X,O}^{n-2}$. The last condition is
equivalent to the triviality of~the vector field $v$, which completes the proof.
\end{proof}

For $\beta \in \Omega_{X, O}^{n-1}$, let $[\beta]$ denote the K\"ahler differential form in $\Omega_{S, O}^{n-1}$ defined by $\beta$, that is, $[\beta]$ is the equivalence class in
$\Omega_{X, O}^{n-1} /\big(f \Omega_{X, O}^{n-1} + {\rm d}f\wedge \Omega_{X, O}^{n-2}\big)$ of $\beta$.

The lemma above amount to say that, for logarithmic vector fields $v$,
$[i_{v}(\omega_X)]$ is a non-zero torsion differential form
in ${\rm Tor}\big(\Omega_{S,O}^{n-1}\big)$ if and only if $v$ is a non-trivial logarithmic vector field.

We say that germs of two logarithmic vector fields $v, v^{\prime} \in {\mathcal D}{\rm er}_{X, O}(-\log S)$ are equivalent, denoted by $ v \sim v^{\prime} $, if $ v-v^{\prime} $ is trivial.
Let ${\mathcal D}{\rm er}_{X, O}(-\log S)/{\sim}$ denote the quotient by the equivalence relation $\sim$. (See~\cite{T}.)

Now consider the following map
\begin{gather*}
 \Theta\colon\ {\mathcal D}{\rm er}_{X, O}(-\log S)/{\sim} \longrightarrow
\Omega_{X, O}^{n-1} /\big(f \Omega_{X, O}^{n-1} + {\rm d}f\wedge \Omega_{X, O}^{n-2}\big)
\end{gather*}
defined to be $ \Theta([v]) = [i_{v}(\omega_X)], $ where $ [v] $ is the equivalence class in
${\mathcal D}{\rm er}_{X, O}(-\log S)/{\sim}$ of $v$. It is easy to see that the map $\Theta$ is well-defined. We arrive at the following description of the torsion module.

\begin{Theorem}[\cite{TN20}]
The map
\begin{gather*}
\Theta\colon\ {\mathcal D}{\rm er}_{X, O}(-\log S)/{\sim} \longrightarrow {\rm Tor}\big(\Omega_{S}^{n-1}\big)
\end{gather*}
is an isomorphism.
\end{Theorem}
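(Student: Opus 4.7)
The plan is to assemble $\Theta$ from two isomorphisms already available in the preceding text, glued together by the preceding Lemma.

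First I would observe that the map
\[
\Phi\colon\ {\mathcal D}{\rm er}_{X, O}(-\log S) \longrightarrow \Omega_{X, O}^{n-1}(\log S),\qquad v \longmapsto \frac{i_{v}(\omega_X)}{f},
\]
is an ${\mathcal O}_{X,O}$-linear isomorphism: injectivity follows because contraction $v \mapsto i_{v}(\omega_X)$ is a classical isomorphism ${\mathcal D}{\rm er}_{X, O} \cong \Omega_{X, O}^{n-1}$, and surjectivity is precisely the content of the Proposition proved above. The Aleksandrov exact sequence recalled earlier then realizes multiplication by $f$ as a surjection $\Omega_{X, O}^{n-1}(\log S) \longrightarrow {\rm Tor}\big(\Omega_{S}^{n-1}\big)$ with kernel $\frac{{\rm d}f}{f}\wedge \Omega_{X,O}^{n-2} + \Omega_{X,O}^{n-1}$.

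Next I would use the preceding Lemma to identify the $\Phi$-preimage of this kernel: the Lemma says that $\Phi(v)$ lies in $\frac{{\rm d}f}{f}\wedge \Omega_{X,O}^{n-2} + \Omega_{X,O}^{n-1}$ if and only if $v$ is trivial, so the $\Phi$-preimage of that kernel is exactly the submodule of trivial logarithmic vector fields. Consequently $\Phi$ descends to an isomorphism of quotients
\[
{\mathcal D}{\rm er}_{X, O}(-\log S)/{\sim} \stackrel{\sim}{\longrightarrow} \Omega_{X, O}^{n-1}(\log S) \Big/ \left( \frac{{\rm d}f}{f}\wedge \Omega_{X,O}^{n-2} + \Omega_{X,O}^{n-1}\right),
\]
and post-composing with the isomorphism induced by $\cdot f$ from the exact sequence produces a map ${\mathcal D}{\rm er}_{X, O}(-\log S)/{\sim} \longrightarrow {\rm Tor}\big(\Omega_{S}^{n-1}\big)$.

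Finally I would verify that this two-step composite coincides with $\Theta$; this amounts to the trivial identity $f \cdot (i_{v}(\omega_X)/f) = i_{v}(\omega_X)$, which shows the composite sends $[v]$ to $[i_{v}(\omega_X)]$, as required. To the extent there is an obstacle, it is just this last bookkeeping step: the three preceding statements have been arranged so that the two isomorphisms compose cleanly, and $\Theta$ is then an isomorphism as the composition of two.
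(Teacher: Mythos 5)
Your argument is correct and is essentially the proof the paper intends: it cites \cite{TN20} for the theorem but arranges the preceding Proposition (surjectivity of $v \mapsto i_v(\omega_X)/f$ onto $\Omega_{X,O}^{n-1}(\log S)$), the Lemma (triviality of $v$ characterizes when $i_v(\omega_X)/f$ lies in $\frac{{\rm d}f}{f}\wedge\Omega_{X,O}^{n-2}+\Omega_{X,O}^{n-1}$), and Aleksandrov's exact sequence precisely so that $\Theta$ is the composite of the two induced isomorphisms, exactly as you describe. No gaps.
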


\subsection{Polar method}

In~\cite{T}, based on the concept of polar variety, logarithmic vector fields are studied and an effective and constructive method is considered. Here in this section, following~\cite{NT19a, T} we recall some basics and give a description of non-trivial logarithmic vector fields.

Let $S =\{x \in X \,|\, f(x) =0\} $ be a hypersurface with an isolated singularity. In what fol\-lows,
we assume that $f, \frac{\partial f}{\partial x_2}, \frac{\partial f}{\partial x_3}, \ldots, \frac{\partial f}{\partial x_{n}}$ is a regular sequence and the common \mbox{locus}
$V\big(f, \frac{\partial f}{\partial x_2}, \frac{\partial f}{\partial x_3},\ldots,\allowbreak\frac{\partial f}{\partial x_{n}}\big)\cap X$
is the origin $O$. See~\cite{NT20b} for an algorithm of testing zero-dimensiona\-lity of varieties at a point.

Let $\big(f, \frac{\partial f}{\partial x_2},$ $\frac{\partial f}{\partial x_3},\ldots,\frac{\partial f}{\partial x_{n}}\big) : \big(\frac{\partial f}{\partial x_1}\big) $ denote the ideal quotient, in the local ring ${\mathcal O}_{X,O}$, of
$\big(f, \frac{\partial f}{\partial x_2}$, $\frac{\partial f}{\partial x_3},\ldots,
\frac{\partial f}{\partial x_{n}}\big)$ by $\big(\frac{\partial f}{\partial x_1}\big)$. We have the following.

\begin{Lemma}\label{Lem2}
Let $ a(x) $ be a germ of holomorphic function in $ {\mathcal O}_{X,O}$. Then, the following are equivalent:
\begin{enumerate}\itemsep=0pt
\item[$(i)$]
$\displaystyle a(x) \in \bigg(f, \frac{\partial f}{\partial x_2}, \frac{\partial f}{\partial x_3},\ldots,\frac{\partial f}{\partial x_{n}}\bigg) : \bigg(\frac{\partial f}{\partial x_1}\bigg)$.

\item[$(ii)$]
There exists a germ of logarithmic vector field $v$ in ${\mathcal D}{\rm er}_{X, O}(-\log S)$ such that
\begin{gather*}
v= a(x)\frac{\partial }{\partial x_1} + a_2(x)\frac{\partial}{\partial x_2} + \cdots + a_{n-1}(x)\frac{\partial}{\partial x_{n-1}}+ a_n(x)\frac{\partial}{\partial x_n},
\end{gather*}
where $ a_2(x), \ldots , a_{n}(x) \in {\mathcal O}_{X, O}$.
\end{enumerate}
\end{Lemma}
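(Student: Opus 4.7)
The plan is to prove both implications by directly manipulating the defining equations of logarithmic vector fields and of the ideal quotient; the statement is essentially an unpacking of definitions, so no deep machinery will be needed.

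For the direction (i) $\Rightarrow$ (ii), I would start from the hypothesis $a(x)\frac{\partial f}{\partial x_1}\in\big(f,\frac{\partial f}{\partial x_2},\ldots,\frac{\partial f}{\partial x_n}\big)$, which by definition of this ideal gives an identity
\begin{gather*}
a(x)\frac{\partial f}{\partial x_1}=b(x)f-a_2(x)\frac{\partial f}{\partial x_2}-\cdots-a_n(x)\frac{\partial f}{\partial x_n}
\end{gather*}
in ${\mathcal O}_{X,O}$, for suitable germs $b(x),a_2(x),\ldots,a_n(x)$. Rearranging, the vector field $v=a\frac{\partial}{\partial x_1}+a_2\frac{\partial}{\partial x_2}+\cdots+a_n\frac{\partial}{\partial x_n}$ satisfies $v(f)=b(x)f\in(f)$, so $v$ lies in ${\mathcal D}{\rm er}_{X,O}(-\log S)$ with the prescribed leading coefficient $a(x)$.

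For the reverse direction (ii) $\Rightarrow$ (i), I would take a logarithmic vector field $v$ of the given form and apply its defining condition $v(f)\in(f)$, namely $\sum_{i=1}^{n}a_i(x)\frac{\partial f}{\partial x_i}=c(x)f$ for some $c(x)\in{\mathcal O}_{X,O}$. Solving this identity for $a(x)\frac{\partial f}{\partial x_1}$ exhibits it as an ${\mathcal O}_{X,O}$-linear combination of $f,\frac{\partial f}{\partial x_2},\ldots,\frac{\partial f}{\partial x_n}$, which is exactly the definition of membership in the ideal quotient.

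There is no serious obstacle here; the only thing to watch is that signs and indices are bookkept correctly when moving terms across the equation. It is worth remarking that the regular-sequence and zero-dimensionality assumptions stated just before the lemma are not actually invoked in this equivalence itself; they play their role elsewhere in the polar method, ensuring that the ideal quotient is finite-codimensional and computationally tractable, and thereby giving the resulting description of non-trivial logarithmic vector fields effective content.
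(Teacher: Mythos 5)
Your proof is correct: both implications follow by rearranging the single identity $\sum_i a_i\,\partial f/\partial x_i = b f$, exactly as you describe, and your remark that the regular-sequence hypothesis is not needed for this equivalence (only for Lemma~3.6 and the effectivity of the polar method) is accurate. The paper states this lemma without proof, treating it as the immediate definitional unpacking you have supplied, so your argument matches the intended one.
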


Note that in~\cite{NT18,NT19a}, by utilizing local cohomology and Grothendieck local duality, an effective method of computing a set of generators over the local ring ${\mathcal O}_{X,O}$ of the module of logarithmic vector fields is given. See the next section.

\begin{Lemma}\label{Lem3}
Assume that $f, \frac{\partial f}{\partial x_2}, \frac{\partial f}{\partial x_3}, \ldots, \frac{\partial f}{\partial x_{n}}$ is a regular sequence. Let $v^{\prime}$ be a logarithmic vector fields in ${\mathcal D}{\rm er}_{X, O}(-\log S)$ of the form
\begin{gather*}
v^{\prime}= a_2(x)\frac{\partial }{\partial x_2} + a_3(x)\frac{\partial}{\partial x_3} + \cdots +a_{n}(x)\frac{\partial}{\partial x_{n}}.
\end{gather*}
Then, $v^{\prime}$ is trivial.
\end{Lemma}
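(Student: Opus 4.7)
The plan is to translate the logarithmic condition on $v'$ into a syzygy statement for the sequence $f, \tfrac{\partial f}{\partial x_2}, \ldots, \tfrac{\partial f}{\partial x_n}$ and then exploit the regularity hypothesis via the Koszul complex. Writing the logarithmic condition $v'(f) \in (f)$ as
\begin{gather*}
cf - a_2 \tfrac{\partial f}{\partial x_2} - a_3 \tfrac{\partial f}{\partial x_3} - \cdots - a_n \tfrac{\partial f}{\partial x_n} = 0
\end{gather*}
for some $c \in \mathcal{O}_{X,O}$, I would interpret the tuple $(c, -a_2, \ldots, -a_n)$ as an element of the first syzygy module of the sequence $f, \tfrac{\partial f}{\partial x_2}, \ldots, \tfrac{\partial f}{\partial x_n}$.

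Next I would invoke the assumption that this sequence is regular: the Koszul complex on a regular sequence is exact, so the first syzygy module is generated by the Koszul relations. Concretely, there exist $m_{ij} \in \mathcal{O}_{X,O}$ for $1 \leq i < j \leq n$ such that, with $g_1 = f$ and $g_k = \tfrac{\partial f}{\partial x_k}$ for $k \geq 2$, the syzygy decomposes as $(c, -a_2, \ldots, -a_n) = \sum_{i<j} m_{ij} (g_j e_i - g_i e_j)$. Reading off the $i$-th coordinate for $i \geq 2$ gives
\begin{gather*}
a_i = m_{1i} f - \sum_{j>i,\, j\geq 2} m_{ij} \tfrac{\partial f}{\partial x_j} + \sum_{2 \leq k < i} m_{ki} \tfrac{\partial f}{\partial x_k}.
\end{gather*}

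Substituting these expressions into $v' = \sum_{i \geq 2} a_i \partial/\partial x_i$ and grouping, I would show that the $m_{1i}f$ terms contribute $\sum_{i \geq 2} m_{1i}\, f \tfrac{\partial}{\partial x_i}$, while for each pair $2 \leq i < j$ the terms involving $m_{ij}$ combine to
\begin{gather*}
-m_{ij}\left( \tfrac{\partial f}{\partial x_j}\tfrac{\partial}{\partial x_i} - \tfrac{\partial f}{\partial x_i}\tfrac{\partial}{\partial x_j}\right).
\end{gather*}
Both kinds of summands lie in the list of generators of trivial logarithmic vector fields, hence $v'$ is trivial.

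The conceptual content is essentially a single step, so the main obstacle is bookkeeping the signs and indices in the Koszul decomposition correctly; no deep analytic input is required beyond invoking exactness of the Koszul complex on a regular sequence. I would just be careful to note that the hypothesis that $f, \tfrac{\partial f}{\partial x_2}, \ldots, \tfrac{\partial f}{\partial x_n}$ is a regular sequence is the only thing that is used, and it is used precisely to ensure that every syzygy is a combination of Koszul syzygies.
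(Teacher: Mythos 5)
The paper states this lemma without proof, recalling it from~\cite{NT19a,T}, so there is no in-paper argument to compare against; I can only assess your proposal on its own merits, and it is correct. The logarithmic condition $v'(f)\in(f)$ is exactly a syzygy $cf-\sum_{i\ge 2}a_i\frac{\partial f}{\partial x_i}=0$ of the sequence $g_1=f$, $g_i=\frac{\partial f}{\partial x_i}$ for $i\ge 2$; regularity of this sequence makes its Koszul complex acyclic, so the syzygy is an ${\mathcal O}_{X,O}$-combination of the Koszul relations $g_je_i-g_ie_j$; and reading off coordinates $2,\dots,n$ expresses $v'$ as a combination of $f\frac{\partial}{\partial x_j}$ (from the relations involving $g_1=f$) and of $\frac{\partial f}{\partial x_j}\frac{\partial}{\partial x_i}-\frac{\partial f}{\partial x_i}\frac{\partial}{\partial x_j}$ for $2\le i<j\le n$ (from the remaining relations), all of which are on the paper's list of generators of trivial vector fields. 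The sign and index bookkeeping you deferred does work out exactly as you predicted, and the first coordinate of the decomposition (the one recovering $c$) is never needed. The only sentences I would add are that acyclicity of the Koszul complex of a regular sequence holds over any commutative ring, so no further hypothesis on ${\mathcal O}_{X,O}$ is used, and that this argument also makes transparent why the paper's list of trivial generators is the "right" one: they are precisely the images under $v\mapsto$ coefficient vector of the Koszul syzygies of $\big(f,\frac{\partial f}{\partial x_1},\dots,\frac{\partial f}{\partial x_n}\big)$.
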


Lemmas~\ref{Lem2} and~\ref{Lem3} immediately yield the following.

\begin{Proposition}
Let $f, \frac{\partial f}{\partial x_2}, \frac{\partial f}{\partial x_3}, \ldots,
\frac{\partial f}{\partial x_{n}} $ be a regular sequence. Let~$v$ be a germ of logarithmic
vector field along $S$ of the form
\begin{gather*}
v= {a_1(x)\frac{\partial }{\partial x_1} + a_2(x)\frac{\partial}{\partial x_2} + \cdots +
 a_{n-1}(x)\frac{\partial}{\partial x_{n-1}}+ a_n(x)\frac{\partial}{\partial x_n}}.
\end{gather*}
 Then, the following conditions are equivalent:
\begin{enumerate}\itemsep=0pt
\item[$(i)$] $v$ is trivial,

\item[$(ii)$] $a_1(x) \in \big(f, \frac{\partial f}{\partial x_2}, \frac{\partial f}{\partial x_3},\ldots,\frac{\partial f}{\partial x_{n}}\big)$.
\end{enumerate}
\end{Proposition}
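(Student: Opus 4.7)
The plan is to prove the two implications separately, using Lemma~\ref{Lem3} as the workhorse for the nontrivial direction, and a direct expansion in generators for the easy direction.

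For the implication (i)$\Rightarrow$(ii), I would unpack the definition of triviality. If $v$ is trivial, then by definition it admits an expression
\begin{gather*}
v = \sum_{i=1}^{n} c_i(x)\, f\, \frac{\partial}{\partial x_i} + \sum_{1\le i<j\le n} d_{ij}(x)\left(\frac{\partial f}{\partial x_j}\frac{\partial}{\partial x_i} - \frac{\partial f}{\partial x_i}\frac{\partial}{\partial x_j}\right)
\end{gather*}
with coefficients in ${\mathcal O}_{X,O}$. Collecting the coefficient of $\frac{\partial}{\partial x_1}$ gives
\begin{gather*}
a_1(x) = c_1(x)\, f + \sum_{j=2}^{n} d_{1j}(x)\, \frac{\partial f}{\partial x_j},
\end{gather*}
which is manifestly an element of the ideal $\big(f, \frac{\partial f}{\partial x_2}, \ldots, \frac{\partial f}{\partial x_n}\big)$.

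For the converse (ii)$\Rightarrow$(i), the key trick is to subtract off a convenient trivial vector field that matches $a_1$ in the first coordinate, and then invoke Lemma~\ref{Lem3}. Concretely, assuming $a_1 = cf + \sum_{j=2}^{n} b_j \frac{\partial f}{\partial x_j}$ for some $c, b_j \in {\mathcal O}_{X,O}$, I would define the trivial logarithmic vector field
\begin{gather*}
w = c\, f\, \frac{\partial}{\partial x_1} + \sum_{j=2}^{n} b_j\left(\frac{\partial f}{\partial x_j}\frac{\partial}{\partial x_1} - \frac{\partial f}{\partial x_1}\frac{\partial}{\partial x_j}\right),
\end{gather*}
whose $\frac{\partial}{\partial x_1}$-coefficient is exactly $a_1$. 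Then $v - w$ is a logarithmic vector field with vanishing $\frac{\partial}{\partial x_1}$-component, hence of the form covered by Lemma~\ref{Lem3}. Applying that lemma yields that $v - w$ is trivial, and since $w$ is trivial by construction, $v$ is trivial.

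There is no serious obstacle here: the argument is essentially bookkeeping, and both lemmas do the heavy lifting. The one point that requires mild attention is checking that the trivial vector field $w$ I write down genuinely lies in the span given in the definition of triviality — which it does, literally by inspection of its two constituent pieces. The regular sequence hypothesis is used only implicitly, through its role in making Lemma~\ref{Lem3} applicable.
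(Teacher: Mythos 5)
Your proposal is correct and follows essentially the same route as the paper, which gives no written proof but states that the result is an immediate consequence of Lemmas~\ref{Lem2} and~\ref{Lem3}; your argument simply makes explicit the intended bookkeeping (reading off the $\frac{\partial}{\partial x_1}$-coefficient of a trivial field for one direction, and subtracting a matching trivial field so that Lemma~\ref{Lem3} applies for the other). No gaps.
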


Therefore, we have the following.

\begin{Theorem}[\cite{T}]
$ {\mathcal D}{\rm er}_{X, O}(-\log S)/{\sim} $ is isomorphic to
\begin{gather*}
\bigg(\bigg(f, \frac{\partial f}{\partial x_2}, \frac{\partial f}{\partial x_3},\ldots,\frac{\partial f}{\partial x_{n}}\bigg) : \bigg(\frac{\partial f}{\partial x_1}\bigg)\bigg)\Big/\bigg(f, \frac{\partial f}{\partial x_2}, \frac{\partial f}{\partial x_3},\ldots,\frac{\partial f}{\partial x_{n}}\bigg).
\end{gather*}

To be more precise, let $A$ be a basis as a vector space of the quotient
\begin{gather*}
\bigg(\bigg(f, \frac{\partial f}{\partial x_2}, \frac{\partial f}{\partial x_3},\ldots,\frac{\partial f}{\partial x_{n}}\bigg) : \bigg(\frac{\partial f}{\partial x_1}\bigg)\bigg)\Big/\bigg(f, \frac{\partial f}{\partial x_2}, \frac{\partial f}{\partial x_3},\ldots,\frac{\partial f}{\partial x_{n}}\bigg).
\end{gather*}
Then the corresponding logarithmic vector fields,
\begin{gather*}
v= a(x)\frac{\partial }{\partial x_1} + a_2(x)\frac{\partial}{\partial x_2} + \cdots +a_{n-1}(x)\frac{\partial}{\partial x_{n-1}}+ a_n(x)\frac{\partial}{\partial x_n}, \qquad
a(x) \in A
\end{gather*}
give rise to a basis of $ {\mathcal D}{\rm er}_{X, O}(-\log S)/{\sim}$.

\end{Theorem}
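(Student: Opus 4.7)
The plan is to construct the isomorphism explicitly, taking Lemmas~\ref{Lem2} and~\ref{Lem3} together with the preceding Proposition as the substantive ingredients. Writing $I=\big(f,\frac{\partial f}{\partial x_2},\ldots,\frac{\partial f}{\partial x_n}\big)$ for brevity, I would define a map
\begin{gather*}
\Phi\colon\ {\mathcal D}{\rm er}_{X,O}(-\log S)/{\sim}\ \longrightarrow\ \bigg(I:\bigg(\frac{\partial f}{\partial x_1}\bigg)\bigg)\bigg/I
\end{gather*}
by sending the class $[v]$ of a logarithmic vector field $v=a_1(x)\frac{\partial}{\partial x_1}+a_2(x)\frac{\partial}{\partial x_2}+\cdots+a_n(x)\frac{\partial}{\partial x_n}$ to the class of its first coefficient $a_1(x)$, and then show that $\Phi$ is a $\mathbb{C}$-linear bijection.

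First I would check well-definedness. The implication $(ii)\Rightarrow(i)$ of Lemma~\ref{Lem2} guarantees $a_1(x)\in I:\big(\frac{\partial f}{\partial x_1}\big)$ whenever $v$ is logarithmic, so the image does land in the stated quotient. If $v\sim v'$, then $v-v'$ is trivial, and the preceding Proposition identifies trivial logarithmic vector fields with those whose first coefficient lies in $I$, so $a_1-a_1'\in I$ and $\Phi([v])=\Phi([v'])$. Next I would verify injectivity: $\Phi([v])=0$ forces $a_1(x)\in I$, and the Proposition yields that $v$ is trivial, so $[v]=0$. For surjectivity, given any $a(x)\in I:\big(\frac{\partial f}{\partial x_1}\big)$, the implication $(i)\Rightarrow(ii)$ of Lemma~\ref{Lem2} furnishes a logarithmic vector field $v$ with $a(x)$ as its first coefficient, giving $\Phi([v])=[a(x)]$.

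The more precise assertion then falls out at once: a $\mathbb{C}$-basis $A$ of the quotient lifts under $\Phi^{-1}$ to a basis of ${\mathcal D}{\rm er}_{X,O}(-\log S)/{\sim}$ whose representatives are logarithmic vector fields of the displayed form. The choice of the tail $a_2(x)\frac{\partial}{\partial x_2}+\cdots+a_n(x)\frac{\partial}{\partial x_n}$ is immaterial because any two such lifts differ by a logarithmic vector field with vanishing first coefficient, which is trivial by Lemma~\ref{Lem3}. I do not expect a substantial obstacle: the whole theorem is an assembly of the two lemmas together with the Proposition, and the only bookkeeping worth pointing out is the independence of the surjectivity step from the choice of lift, which is exactly the role played by Lemma~\ref{Lem3}.
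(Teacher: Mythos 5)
Your proposal is correct and follows exactly the route the paper intends: the paper derives the theorem as an immediate consequence of Lemma~\ref{Lem2}, Lemma~\ref{Lem3} and the intervening Proposition (triviality of $v$ is equivalent to $a_1(x)\in\big(f,\frac{\partial f}{\partial x_2},\ldots,\frac{\partial f}{\partial x_n}\big)$), which is precisely the map $[v]\mapsto[a_1]$ you construct and check. Your write-up merely makes explicit the well-definedness, injectivity and surjectivity that the paper leaves as "therefore."
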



\subsection{Local cohomology and duality}

In this section, we briefly recall some basics on local cohomology and Grothendieck local duality. We give an outline for computing
non-trivial logarithmic vector fields. We refer to~\cite{TN20} for details.

Let ${\mathcal H}_{\{O\}}^n\big(\Omega_X^n\big)$ denote the local cohomology supported at the origin $O$ of the sheaf $\Omega_X^n$ of~holo\-morphic $n$-forms. Then, the stalk ${\mathcal O}_{X, O}$ and the local cohomology ${\mathcal H}_{\{O\}}^n\big(\Omega_X^n\big)$ are mutually dual as locally convex topological vector spaces.

The duality is given by the point residue pairing:
\[
{\rm Res}_{\{O\}}(*, *)\colon\ {\mathcal O}_{X,O} \times {\mathcal H}_{\{O\}}^n\big(\Omega_X^n\big) \longrightarrow {\mathbb C}.
\]

Let $W_{\Gamma(f)}$ denote the set of local cohomology classes in ${\mathcal H}_{\{O\}}^n\big(\Omega_X^n\big)$ that are annihilated by
$f$, $\frac{\partial f}{\partial x_2}, \frac{\partial f}{\partial x_{3}}, \dots, \frac{\partial f}{\partial x_{n}}$:
\[
W_{\Gamma(f)} = \SetDef{ \varphi \in {\mathcal H}_{\{O\}}^n\big(\Omega_X^n\big)}{f\varphi = \frac{\partial f}{\partial x_2}\varphi = \cdots = \frac{\partial f}{\partial x_{n}}\varphi = 0}.
\]
Then, a complex analytic version of Grothendieck local duality on residue implies that the pairing
\begin{gather*}
{\mathcal O}_{X,O}\Big/\bigg(f, \frac{\partial f}{\partial x_2}, \frac{\partial f}{\partial x_3},\ldots,\frac{\partial f}{\partial x_{n}}\bigg) \times W_{\Gamma(f)} \longrightarrow {\mathbb C}
\end{gather*}
is non-degenerate.

Let $\mu(f)$ and $\mu(f|_{H_{x_1}})$ denote the Milnor number of $f$ and that of a hyperplane section $f|_{H_{x_1}}$ of $f$, where $f|_{H_{x_1}}$ is the restriction of $f$ to the hyperplane $H_{x_1} =\{ x \in X \,|\, x_1=0\}$. Then, the classical L\^e--Teissier formula~\cite{L,Te73} and the Grothendieck local duality imply the following:
\[
\dim_{{\mathbb C}}W_{\Gamma(f)} = \mu(f) + \mu(f|_{H_{x_1}}).
\]

Let $\gamma\colon W_{\Gamma(f)} \longrightarrow W_{\Gamma(f)}$ be a map defined by
$\gamma(\varphi)=\frac{\partial f}{\partial x_1}\ast\varphi$ and let $W_{\Gamma(f)}$ be the image of the map $\gamma$:
\[
W_{\Delta(f)} = \SetDef{ \frac{\partial f}{\partial x_1}\ast\varphi}{\varphi \in W_{\Gamma(f)}}.
\]

Let $ {\rm Ann}_{{\mathcal O}_{X,O}}(W_{\Delta(f)}) $ be the annihilator in $ {\mathcal O}_{X,O} $ of the set $ W_{\Delta(f)} $ of local cohomology classes.
We have the following.

\begin{Lemma}[\cite{T}]
${\rm Ann}_{{\mathcal O}_{X,O}}(W_{\Delta(f)}) = \big(f, \frac{\partial f}{\partial x_2},
\frac{\partial f}{\partial x_3},\ldots,
\frac{\partial f}{\partial x_{n}}\big) : \big(\frac{\partial f}{\partial x_1}\big)$.
\end{Lemma}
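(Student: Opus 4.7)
The plan is to reduce the annihilator computation for $W_{\Delta(f)}$ to that for $W_{\Gamma(f)}$, and then to identify $\mathrm{Ann}_{{\mathcal O}_{X,O}}(W_{\Gamma(f)})$ via Grothendieck local duality. Let me write $I = \bigl(f, \frac{\partial f}{\partial x_2}, \frac{\partial f}{\partial x_3}, \ldots, \frac{\partial f}{\partial x_n}\bigr)$ for the relevant ideal and $R = {\mathcal O}_{X,O}/I$ for the Artinian quotient.

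First, I would establish the identity $\mathrm{Ann}_{{\mathcal O}_{X,O}}(W_{\Gamma(f)}) = I$. The inclusion $I \subseteq \mathrm{Ann}_{{\mathcal O}_{X,O}}(W_{\Gamma(f)})$ is built into the definition of $W_{\Gamma(f)}$: each generator of $I$ kills every $\varphi \in W_{\Gamma(f)}$ as a local cohomology class. For the reverse inclusion, suppose $g \in {\mathcal O}_{X,O}$ satisfies $g\varphi = 0$ for all $\varphi \in W_{\Gamma(f)}$. Then $\mathrm{Res}_{\{O\}}(g\varphi) = 0$ for every $\varphi$, so the class $\bar g \in R$ pairs trivially with all of $W_{\Gamma(f)}$. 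By the non-degeneracy of the residue pairing $R \times W_{\Gamma(f)} \to {\mathbb C}$ recalled above, this forces $\bar g = 0$ in $R$, i.e., $g \in I$.

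Second, I would translate the statement about $W_{\Delta(f)}$ into one about $W_{\Gamma(f)}$. By the very definition of $W_{\Delta(f)}$ as the image of the multiplication map $\gamma(\varphi) = \frac{\partial f}{\partial x_1} \ast \varphi$, an element $a \in {\mathcal O}_{X,O}$ annihilates $W_{\Delta(f)}$ if and only if
\begin{gather*}
a \cdot \Bigl(\tfrac{\partial f}{\partial x_1} \ast \varphi\Bigr) = \Bigl(a \cdot \tfrac{\partial f}{\partial x_1}\Bigr) \ast \varphi = 0 \qquad \text{for every } \varphi \in W_{\Gamma(f)}.
\end{gather*}
Combining this with the first step, this is equivalent to $a \cdot \frac{\partial f}{\partial x_1} \in \mathrm{Ann}_{{\mathcal O}_{X,O}}(W_{\Gamma(f)}) = I$, which by definition of the ideal quotient is exactly the condition $a \in I : \bigl(\frac{\partial f}{\partial x_1}\bigr)$.

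The main subtlety is the first step: one must be careful that "annihilating as a module element" and "pairing to zero" give the same conclusion. The rescue is that if $g\varphi = 0$ for every $\varphi \in W_{\Gamma(f)}$, then automatically $\mathrm{Res}_{\{O\}}(g\varphi) = 0$, so non-degeneracy of the residue pairing on the quotient $R$ is enough to force $g \in I$. Everything else is a clean formal manipulation with the multiplicative action of ${\mathcal O}_{X,O}$ on local cohomology and the definition of the ideal quotient, so I expect no further obstacles.
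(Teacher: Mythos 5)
Your argument is correct: the identification $\mathrm{Ann}_{{\mathcal O}_{X,O}}(W_{\Gamma(f)})=\bigl(f,\frac{\partial f}{\partial x_2},\ldots,\frac{\partial f}{\partial x_n}\bigr)$ via the non-degenerate residue pairing, followed by the formal reduction $a\in\mathrm{Ann}(W_{\Delta(f)})\Leftrightarrow a\cdot\frac{\partial f}{\partial x_1}\in\mathrm{Ann}(W_{\Gamma(f)})$, is exactly the duality argument this framework is built for. The paper itself gives no proof here (it only cites \cite{NT16a,T,TNN}), so your write-up is a self-contained version of the intended argument and I see no gap in it.
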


\begin{proof}
See~\cite{NT16a,T,TNN}.
\end{proof}

Recall that the ideal quotient $\big(f, \frac{\partial f}{\partial x_2}, \frac{\partial f}{\partial x_3},\ldots,\frac{\partial f}{\partial x_{n}}\big) : \big(\frac{\partial f}{\partial x_1}\big)$ is coefficient ideal w.r.t.~$\frac{\partial}{\partial x_1} $ of~loga\-rithmic vector fields along $S$. The lemma above says that the coefficient ideal can be~described in terms of local cohomology $ W_{\Delta(f)}$.

Let $W_{T(f)}$ be the kernel of the map $\gamma$. By definition we have
\[
W_{T(f)} = \SetDef{\varphi \in {\mathcal H}_{\{O\}}^n\big(\Omega_X^n\big)}{f\varphi = \frac{\partial f}{\partial x_1}\varphi = \frac{\partial f}{\partial x_2}\varphi = \cdots = \frac{\partial f}{\partial x_{n}}\varphi = 0}.
\]
Since the pairing
\begin{gather*}
\displaystyle{\mathcal O}_{X,O}\Big/\!\bigg(f, \frac{\partial f}{\partial x_1} \frac{\partial f}{\partial x_2}, \frac{\partial f}{\partial x_3},\ldots,\frac{\partial f}{\partial x_{n}}\bigg) \times W_{T(f)} \longrightarrow {\mathbb C}
\end{gather*}
is non-degenerate by Grothendieck local duality,
$ \dim_{{\mathbb C}}(W_{T(f)}) $ is equal to
\[
\tau = \dim_{{\mathbb C}}\bigg( {\mathcal O}_{X,O}\Big/\!\bigg(f, \frac{\partial f}{\partial x_1} \frac{\partial f}{\partial x_2}, \frac{\partial f}{\partial x_3},\ldots,\frac{\partial f}{\partial x_{n}}\bigg)\bigg),
\]
the Tjurina number.

From the exactness of the sequence
\[
0 \longrightarrow W_{T(f)} \longrightarrow W_{\Gamma(f)} \longrightarrow W_{\Delta(f)} \longrightarrow 0,
\]
we have
\[
\dim_{{\mathbb C}}W_{\Delta(f)} = \mu(f) - \tau(f) + \mu(f|_{H_{x_1}}).
\]

The argument above also implies the following.

\begin{Corollary}[\cite{T}]
\[
\dim_{{\mathbb C}}\big({\mathcal D}{\rm er}_{X, O}(-\log S)/{\sim}\big) = \tau.
\]
\end{Corollary}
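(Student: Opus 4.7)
The plan is to read the dimension off the theorem stated just above, combined with the local cohomology description recalled in the preceding paragraphs. Write $I = \bigl(f, \partial f/\partial x_2, \ldots, \partial f/\partial x_n\bigr)$ and $J = I : \bigl(\partial f/\partial x_1\bigr)$. The theorem identifies ${\mathcal D}{\rm er}_{X,O}(-\log S)/{\sim}$ with the quotient $J/I$, while the preceding lemma identifies $J$ with ${\rm Ann}_{{\mathcal O}_{X,O}}\bigl(W_{\Delta(f)}\bigr)$.

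Next I would apply Grothendieck local duality twice. Because $f, \partial f/\partial x_2, \ldots, \partial f/\partial x_n$ is a regular sequence whose common zero locus at the origin is $\{O\}$, the quotient ${\mathcal O}_{X,O}/I$ is finite dimensional and the point residue pairing yields
\[
\dim_{\mathbb C}({\mathcal O}_{X,O}/I) = \dim_{\mathbb C} W_{\Gamma(f)}.
\]
Applied to the larger ideal $J$ the same duality gives $\dim_{\mathbb C}({\mathcal O}_{X,O}/J) = \dim_{\mathbb C} W_{\Delta(f)}$, using the lemma above to identify $J$ with the annihilator. Since $I \subset J$, subtraction produces
\[
\dim_{\mathbb C}(J/I) = \dim_{\mathbb C} W_{\Gamma(f)} - \dim_{\mathbb C} W_{\Delta(f)}.
\]

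To finish, I would invoke the short exact sequence
\[
0 \longrightarrow W_{T(f)} \longrightarrow W_{\Gamma(f)} \longrightarrow W_{\Delta(f)} \longrightarrow 0
\]
recalled just above, which turns the right-hand side of the previous display into $\dim_{\mathbb C} W_{T(f)}$. A third application of Grothendieck duality, already made in the paper, identifies $\dim_{\mathbb C} W_{T(f)}$ with the Tjurina number $\tau$ via the perfect pairing ${\mathcal O}_{X,O}/\bigl(f, \partial f/\partial x_1, \ldots, \partial f/\partial x_n\bigr) \times W_{T(f)} \to {\mathbb C}$. Chaining the three identifications gives the corollary.

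The argument is essentially a repackaging of the dimension count already developed in the section, so I do not expect a genuine obstacle. The one place to watch is that both $I$ and $J$ are $\mathfrak m$-primary before invoking the point residue pairing: for $I$ this follows from the regular-sequence hypothesis, and for $J$ it is then automatic from $I \subset J$. Only with that in hand are we entitled to use the finite-dimensional form of Grothendieck duality on both sides, which is what makes the additivity $\dim_{\mathbb C}(J/I) = \dim_{\mathbb C}({\mathcal O}_{X,O}/I) - \dim_{\mathbb C}({\mathcal O}_{X,O}/J)$ rigorous.
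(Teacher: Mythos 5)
Your proposal is correct and follows essentially the same route the paper intends: the corollary is stated as a consequence of ``the argument above,'' namely the identification of ${\mathcal D}{\rm er}_{X,O}(-\log S)/{\sim}$ with the ideal-quotient module, the Grothendieck duality pairings for $W_{\Gamma(f)}$, $W_{\Delta(f)}$ and $W_{T(f)}$, and the short exact sequence $0 \to W_{T(f)} \to W_{\Gamma(f)} \to W_{\Delta(f)} \to 0$. Your extra remark about $\mathfrak m$-primarity is a sensible precaution but does not change the argument.
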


Notice that the dimension of $W_{\Delta(f)}$ that measures the way of vanishing of coefficients of~loga\-rithmic vector fields depends on the choice of a~system of coordinates, or a~hyperplane. In~order to analyze complex analytic properties of logarithmic vector fields, as we observed in~\cite{T}, it~is important to select an appropriate system of coordinates or a~generic hyperplane. We return to~this issue afterwards at the end of this section.

Now let
$\displaystyle H_{[O]}^n({\mathcal O}_X) = \lim_{k \to \infty} {\rm Ext}_{{\mathcal O}_X}^n \big({\mathcal O}_{X,O}/(x_1,x_2,\ldots,x_n)^k, {\mathcal O}_X\big)$
be the sheaf of algebraic local cohomology and let
\begin{gather*}
H_{\Gamma(f)} = \SetDef{ \phi \in H_{[O]}^n({\mathcal O}_X) }{f\phi = \frac{\partial f}{\partial x_2}\phi = \cdots = \frac{\partial f}{\partial x_{n}}\phi = 0},
\\
H_{\Delta(f)} = \SetDef{\frac{\partial f}{\partial x_1}\phi}{\phi\in H_{\Gamma(f)}}.
\end{gather*}
Then, the following holds
\[
W_{\Gamma(f)} = \{\phi \cdot \omega_X \,|\, \phi \in H_{\Gamma(f)}\}, \qquad
W_{\Delta(f)} = \{\phi \cdot \omega_X \,|\, \phi \in H_{\Delta(f)}\}.
\]

In~\cite{TNN}, algorithms for computing algebraic local cohomology classes and some relevant algo\-rithms are given.
Accordingly, $ H_{\Gamma(f)}, H_{\Delta(f)} $ are computable. Note also that a~standard basis of~the ideal quotient
$\big(f, \frac{\partial f}{\partial x_2}, \frac{\partial f}{\partial x_3},\ldots,
\frac{\partial f}{\partial x_{n}}\big) : \big(\frac{\partial f}{\partial x_1}\big)$ can be computed by using $ H_{\Delta(f)} $ in an efficient manner~\cite{TNN}.

Now we present an outline of a method for constructing a basis, as a vector space, of the quotient space
$\big(\big(f, \frac{\partial f}{\partial x_2}, \frac{\partial f}{\partial x_3},\ldots,
\frac{\partial f}{\partial x_{n}}\big) : \big(\frac{\partial f}{\partial x_1}\big)\big)/\big(f, \frac{\partial f}{\partial x_2}, \frac{\partial f}{\partial x_3},\ldots,
\frac{\partial f}{\partial x_{n}}\big)$.

We fix a term ordering $\succ$ on $H_{[O]}^n({\mathcal O}_X)$ and its inverse term ordering $\succ^{-1}$
on the local ring~${\mathcal O}_{X,O}$.
\begin{enumerate}\itemsep=0pt
\setlength{\leftskip}{0.45cm}

\item[{\it Step} 1:] Compute a basis $\Phi_{\Gamma(f)} $ of $ H_{\Gamma(f)}$.

\item[{\it Step} 2:] Compute a monomial basis $M_{\Gamma(f)}$ of the quotient space ${\mathcal O}_{X,O}/
\big(f, \frac{\partial f}{\partial x_2}, \frac{\partial f}{\partial x_3},\ldots,
\frac{\partial f}{\partial x_{n}}\big)$, with respect to $\succ^{-1}$, by using $\Phi_{\Gamma(f)}$.

\item[{\it Step} 3:] Compute $\frac{\partial f}{\partial x_n}\phi$ of each $\phi \in \Phi_{\Gamma(f)}$ and compute a basis $\Phi_{\Delta(f)}$ of $H_{\Delta(f)}$.

\item[{\it Step} 4:] Compute a standard basis ${\rm SB}$ of the ideal ${\rm Ann}_{{\mathcal O}_{X, O}}(H_{\Delta(f)})$ by using $\Phi_{\Delta(f)}$.

\item[{\it Step} 5:] Compute the normal form ${\rm NF}_{\succ^{-1}}\big(x^{\lambda} s(x)\big)$ of $x^{\lambda}s(x)$ for $x^{\lambda} \in M_{\Gamma(f)}, s(x) \in {\rm SB}$.

\item[{\it Step} 6:] Compute a basis $ {\rm A} $, as a vector space, of
${\rm Span}_{{\mathbb C}}\big\{ {\rm NF}_{\succ^{-1}}(x^{\lambda} s(x)) \,|\, x^{\lambda} \in M_{\Gamma(f)}$,\linebreak \mbox{$s(x) \in {\rm SB} \big\}$}.
\end{enumerate}
Then, we have the following:
\begin{gather*}
{\rm Span}_{{\mathbb C}}({\rm A}) \cong \bigg(\bigg(f, \frac{\partial f}{\partial x_2}, \frac{\partial f}{\partial x_3},\ldots,
\frac{\partial f}{\partial x_{n}}\bigg) : \bigg(\frac{\partial f}{\partial x_1}\bigg)\bigg)\Big/\bigg(f, \frac{\partial f}{\partial x_2}, \frac{\partial f}{\partial x_3},\ldots,
\frac{\partial f}{\partial x_{n}}\bigg).
\end{gather*}

Note that, by utilizing algorithms given in~\cite{NT17a}, the method proposed above can be extended to treat parametric cases,
the case where the input data contain parameters.

In order to obtain non-trivial logarithmic vector fields, it is enough to do the following.

For each $a(x) \in A$, compute $a_2(x), a_3(x),\ldots,a_n(x), b(x) \in {\mathcal O}_{X,O}$, such that
\begin{gather*}
a(x)\frac{\partial f}{\partial x_1} + a_2(x)\frac{\partial f}{\partial x_2} + \cdots + a_{n-1}(x)\frac{\partial f}{\partial x_{n-1}}+ a_n(x)\frac{\partial f}{\partial x_n} - b(x)f(x) =0.
\end{gather*}

{\samepage
Then,
\begin{gather*}
a(x)\frac{\partial }{\partial x_1} + a_2(x)\frac{\partial }{\partial x_2} + \cdots + a_{n-1}(x)\frac{\partial }{\partial x_{n-1}}+ a_n(x)\frac{\partial }{\partial x_n}, \qquad a(x) \in A
\end{gather*}
gives rise to the desired set of non-trivial logarithmic vector fields.

}

The step above can be executed efficiently by using an algorithm described in~\cite{NT16b}. See also~\cite{TN20} for details.

Before ending this section, we turn to the issue on the genericity. For this purpose, let us recall a result of B. Teissier on this subject.

Let $ p^{\prime}= (p_1^{\prime}, p_2^{\prime}, \dots, p_n^{\prime}) $ be a non-zero vector and let $ [p^{\prime}] $ denote
the corresponding point in~the projective space $ {\mathbb P}^{n-1}$. We identify the hyperplane
\[
H_{p^{\prime}} = \big\{ (x_1,x_2,\dots, x_n) \in {\mathbb C}^{n} \,|\, p_1^{\prime}x_1+p_2^{\prime}x_2+ \cdots + p_n^{\prime}x_n =0 \big\}
\]
with the point $ [p^{\prime}] $ in $ {\mathbb P}^{n-1}$.
In~\cite{Te73, Te77}, B.~Teissier introduced an invariant $ \mu^{(n-1)}(f) $ as
\[
\mu^{(n-1)}(f) = \min_{[p^{\prime}] \in {\mathbb P}^{n-1}} \mu(f|_{H_{p^{\prime}}}),
\]
where $ f|_{H_{p^{\prime}}} $ is the restriction of $f$ to $ H_{p^{\prime}} $ and
$ \mu(f|_{H_{p^{\prime}}}) $ is the Milnor number at the origin $O$ of~the~hyperplane section $ f|_{H_{p^{\prime}}} $ of $f $.
He also proved that the set
\[
U=\big\{[p^{\prime}] \in {\mathbb P}^{n-1} \,|\, \mu(f|_{H_{p^{\prime}}}) = \mu^{(n-1)}(f)\big\}
\]
is a Zariski open dense subset of $ {\mathbb P}^{n-1}$.

Accordingly, in order to obtain good representations of logarithmic vector fields, it is desirable to use a generic system of coordinate or a generic hyperplane $H_{p^{\prime}}$ that satisfies the condition $\mu(f|_{H_{p^{\prime}}}) = \mu^{(n-1)}(f)$.

In a previous paper~\cite{NT18b}, methods for computing limiting tangent spaces were studied and an~algorithm of computing $ \mu(f|_{H_{p^{\prime}}})$, $p^{\prime} \in {\mathbb P}^{n-1} $ was given. In~\cite{NT17b,NT19b}, more effective algorithms for computing $ \mu^{(n-1)} $ were given. Utilizing the results in~\cite{NT17b,NT19b}, an effective method for compu\-ting logarithmic vector fields that takes care of the genericity condition is designed in~\cite{NT19a, TN20}. See~also~\cite{TN20a} for related results.

\subsection{Regular meromorphic differential forms}

Now we are ready to consider a method for computing regular meromorphic differential forms. For simplicity, we first consider a 3-dimensional case. Assume that a non-trivial logarithmic vector field~$v$ is given:
\begin{gather*}
v=a_1(x)\frac{\partial}{\partial x_1} + a_2(x)\frac{\partial}{\partial x_2}+a_3(x)\frac{\partial}{\partial x_3}.
\end{gather*}

Let $v(f)=b(x)f(x)$ and $\beta=i_v(\omega_X) $, where $\omega_X={\rm d}x_1\wedge {\rm d}x_2\wedge {\rm d}x_3$. We have $\beta=a_1(x){\rm d}x_2\wedge {\rm d}x_3-a_2(x){\rm d}x_1\wedge {\rm d}x_3+a_3(x){\rm d}x_1\wedge {\rm d}x_2$.
We introduce differential forms~$ \xi$ and~$\eta$ as
\begin{gather*}
\xi=-a_2(x){\rm d}x_3+a_3(x){\rm d}x_2, \qquad
\eta =b(x){\rm d}x_2\wedge {\rm d}x_3.
\end{gather*}

Let $g(x)=\frac{\partial f}{\partial x_1}$. Then, the following holds
\begin{gather*}
g(x)\beta= {\rm d}f\wedge \xi + f(x)\eta.
\end{gather*}

Accordingly, the logarithmic differential form $ \omega=\frac{\beta}{f} $ satisfies
\begin{gather*}
g(x)\omega=\frac{{\rm d}f}{f} \wedge \xi + \eta.
\end{gather*}

We may assume that the coordinate system $(x_1,x_2,x_3)$ is generic~\cite{NT19a} and $g(x)$ satisfies the condition $(a)$, $(b)$ of $(iii)$ in Definition~\ref{def1}.

Since $ g(x) = \frac{\partial f}{\partial x_1}, $ we have, by definition, the following:
\begin{gather*}
{\rm res}\bigg(\frac{\beta}{f}\bigg) = \frac{\xi}{\frac{\partial f}{\partial x_1}}\Bigg|_S.
\end{gather*}
Notice that the differential form $ \xi$ above is directly defined from the coefficients of the logarithmic vector field $v$.

\begin{Proposition}\label{pro4}
 Let $S=\{ x \in X \,|\, f(x)=0 \} $ be a hypersurface with an isolated singularity at the origin $ O \in X \subset {\mathbb C}^n$.
Assume that the coordinate system $ (x_1,x_2,\ldots,x_n) $ is generic so that
$\big(f, \frac{\partial f}{\partial x_2}, \frac{\partial f}{\partial x_3},\ldots,\frac{\partial f}{\partial x_{n}}\big)$ is a regular sequence and $ g(x) =\frac{\partial f}{\partial x_1} $ satisfies the condition~$(a)$,~$(b)$ of $(iii)$ in Definition~$\ref{def1}$.
Let
\begin{gather*}
v=a_1(x)\frac{\partial}{\partial x_1} + a_2(x)\frac{\partial}{\partial x_2}+ \cdots +a_n(x)\frac{\partial}{\partial x_n}
\end{gather*}
be a germ of non-trivial logarithmic vector field along $S$. Let $ v(f)=b(x)f(x)$, $\beta=i_v(\omega_X)$. Let~$\xi$,~$\eta$ denote the differential form defined to be
\begin{gather*}
\xi = -a_2(x){\rm d}x_3 \wedge {\rm d}x_4 \wedge \cdots \wedge {\rm d}x_n +a_3(x){\rm d}x_2 \wedge {\rm d}x_4 \wedge \cdots \wedge {\rm d}x_n - \cdots
\\\phantom{\xi =-}{}
 +(-1)^{(n+1)}a_n(x){\rm d}x_2 \wedge {\rm d}x_3 \wedge \cdots \wedge {\rm d}x_{n-1}, \\
\eta = \phantom{-}b(x) {\rm d}x_2 \wedge {\rm d}x_3 \wedge \cdots \wedge {\rm d}x_n.
\end{gather*}
Then,
\begin{gather*}
g(x)\frac{\beta}{f} = \frac{{\rm d}f}{f}\wedge \xi + \eta\qquad
\text{and} \qquad
{\rm res}\bigg(\frac{\beta}{f}\bigg) = \frac{\xi}{\frac{\partial f}{\partial x_1}}\Bigg|_S
\end{gather*}
 hold.
 \end{Proposition}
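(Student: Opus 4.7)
The plan is a direct verification by expanding both sides of the claimed identity $g\beta = df \wedge \xi + f\eta$ (once this holds, dividing by $f$ yields the first equation, and the residue formula is then immediate from the definition, since by assumption $g = \partial f/\partial x_1$ meets conditions $(a)$ and $(b)$ of Definition~\ref{def1}$(iii)$).

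First I would write out the interior product explicitly,
\[
\beta = i_v(\omega_X) = \sum_{i=1}^{n} (-1)^{i-1} a_i(x)\, dx_1 \wedge \cdots \wedge \widehat{dx_i} \wedge \cdots \wedge dx_n,
\]
so that $g\beta$ is a manifest combination of the $n$ basic $(n-1)$-forms obtained by omitting one coordinate differential, each multiplied by $\partial f/\partial x_1$.

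Next I would compute $df \wedge \xi$ by writing $df = \sum_{j=1}^n \frac{\partial f}{\partial x_j} dx_j$ and distributing over $\xi$. Since $\xi$ is built only from $dx_2, \ldots, dx_n$, for $j \geq 2$ the wedge $dx_j \wedge \xi$ kills every summand except the one in which $dx_j$ is missing; after counting the $j-2$ transpositions needed to reinsert $dx_j$ into position, the signs combine with the $(-1)^{i-1}$ in the definition of $\xi$ to give $dx_j \wedge \xi = -a_j\, dx_2 \wedge \cdots \wedge dx_n$. For $j=1$, the wedge $dx_1 \wedge \xi$ is already in natural order and reproduces the $i \geq 2$ part of $\beta$, scaled by $\partial f/\partial x_1$. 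Putting the pieces together,
\[
df \wedge \xi = \frac{\partial f}{\partial x_1}\!\!\sum_{i=2}^n (-1)^{i-1} a_i\, dx_1 \wedge \cdots \wedge \widehat{dx_i} \wedge \cdots \wedge dx_n - \sum_{j=2}^n a_j \frac{\partial f}{\partial x_j}\, dx_2 \wedge \cdots \wedge dx_n.
\]

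Now I invoke the logarithmic condition $v(f) = bf$, i.e.\ $\sum_{j=2}^n a_j \frac{\partial f}{\partial x_j} = bf - a_1 \frac{\partial f}{\partial x_1}$, to rewrite the second sum. Adding $f\eta = bf\, dx_2 \wedge \cdots \wedge dx_n$ the two occurrences of $bf$ cancel, leaving precisely $\frac{\partial f}{\partial x_1}\beta = g\beta$. Dividing through by $f$ proves the first identity, and the residue formula then follows tautologically from the definition of $\mathrm{res}$ applied to $\omega = \beta/f$ with the present choice of $g, \xi, \eta$.

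The whole argument is a bookkeeping exercise: there is no conceptual obstacle, but one must be careful about the sign conventions in $\xi$ (the alternating signs $(-1)^{i-1}$) and in the reordering of wedge factors. The only place where anything nontrivial is used is the cancellation step, which succeeds exactly because $v$ is logarithmic — so this verification also explains why the definition of $\xi$ and $\eta$ had to be made in this specific form.
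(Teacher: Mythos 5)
Your proof is correct and follows essentially the same route as the paper, which carries out exactly this direct expansion of $g\beta$ and ${\rm d}f\wedge\xi$ in the $3$-dimensional case just before the proposition and then asserts the general statement; your version simply does the sign bookkeeping for arbitrary $n$. The key cancellation via $v(f)=b(x)f(x)$ and the appeal to the definition of ${\rm res}$ with this particular choice of $g$, $\xi$, $\eta$ are precisely the paper's argument.
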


Note that, in 1984, M.~Kersken~\cite{K84} obtained related results on regular meromorphic differential forms. The statement in Proposition~\ref{pro4} above is a refinement a result of M.~Kersken.

\begin{Theorem}\label{Th6}
Let $S=\{ x \in X \,|\, f(x)=0\} $ be a hypersurface with an isolated singularity at the origin $ O \in X \subset {\mathbb C}^n$. Let $V=\{v_1, v_2,\ldots, v_{\tau}\} $ be a set of non-trivial logarithmic vector fields such that the class $[v_1], [v_2], \dots, [v_{\tau}]$ constitute a~basis of the vector space ${\mathcal D}{\rm er}_{X,O}(-\log S)/{\sim}$, where~$\tau$ stands for the Tjurina number of $f$. Let $\xi_1, \xi_2, \ldots, \xi_{\tau}$ be the differential forms correspond to
$v_1, v_2, \ldots, v_{\tau}$ defined in Proposition~$\ref{pro4}$.

Then, any logarithmic residue in ${\rm res}\big(\Omega^{n-1}(\log S)\big)$, or a regular meromorphic differential form~$\gamma$ in $\omega_S^{n-2}$ can be represented as
\begin{gather*}
\gamma = \bigg(\frac{1}{\frac{\partial f}{\partial x_1}}(c_1\xi_1+c_2\xi_2+ \cdots +c_{\tau}\xi_{\tau})\bigg)\bigg|_S + \alpha,
\end{gather*}
where $c_i \in {\mathbb C}$, $i=1,2,\dots, \tau$, and $ \left. \alpha \in \Omega_X^{n-2}\middle|\raisebox{-0.7ex}[1ex][-2ex]{{\hspace{-1mm}\;$_S$}}\right. $.
\end{Theorem}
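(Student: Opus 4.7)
The plan is to lift $\gamma$ to a logarithmic form, use the isomorphism $\Theta$ from Section~\ref{sec3} to rewrite it modulo $\frac{df}{f}\wedge\Omega_{X,O}^{n-2}+\Omega_{X,O}^{n-1}$ as a $\mathbb{C}$-linear combination of the model forms $i_{v_i}(\omega_X)/f$, and then compute residues via Proposition~\ref{pro4}.

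First I would invoke the Aleksandrov isomorphism $\mathrm{res}\big(\Omega_{X,O}^{n-1}(\log S)\big)\cong\omega_S^{n-2}$ to pick $\omega\in\Omega_{X,O}^{n-1}(\log S)$ with $\mathrm{res}(\omega)=\gamma$. The Aleksandrov exact sequence then sends $\omega$ to $[f\omega]\in\mathrm{Tor}(\Omega_S^{n-1})$. Since $\Theta$ is an isomorphism and $[v_1],\ldots,[v_\tau]$ is a basis of $\mathcal{D}\mathrm{er}_{X,O}(-\log S)/{\sim}$, there exist unique $c_1,\ldots,c_\tau\in\mathbb{C}$ and a trivial logarithmic vector field $v_0$ such that, writing $v=c_1v_1+\cdots+c_\tau v_\tau+v_0$, one has
\begin{gather*}
f\omega - i_v(\omega_X) \;=\; f\eta_0 + df\wedge\xi_0
\end{gather*}
for some $\eta_0\in\Omega_{X,O}^{n-1}$ and $\xi_0\in\Omega_{X,O}^{n-2}$. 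Dividing by $f$ yields
\begin{gather*}
\omega \;=\; \frac{i_v(\omega_X)}{f} + \eta_0 + \frac{df}{f}\wedge\xi_0.
\end{gather*}

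Next I would apply the residue map to this identity. The holomorphic summand $\eta_0$ gives residue $0$, while $\mathrm{res}\big(\tfrac{df}{f}\wedge\xi_0\big)=\xi_0|_S\in\Omega_{X,O}^{n-2}|_S$. For the remaining summand, I use $\mathbb{C}$-linearity of $v\mapsto\xi_v$ together with Proposition~\ref{pro4} applied to each $v_i$, obtaining $\mathrm{res}\big(i_{v_i}(\omega_X)/f\big)=\xi_i\big/(\partial f/\partial x_1)\big|_S$. The trivial piece is harmless: by the lemma characterizing trivial vector fields in Section~\ref{sec3}, $i_{v_0}(\omega_X)/f$ lies in $\Omega_{X,O}^{n-1}+\frac{df}{f}\wedge\Omega_{X,O}^{n-2}$, whose image under $\mathrm{res}$ is contained in $\Omega_{X,O}^{n-2}|_S$ by the identity displayed right after Definition~\ref{def1}. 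Packaging every term that lands in $\Omega_{X,O}^{n-2}|_S$ into a single form $\alpha$ produces exactly the asserted representation.

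The main obstacle is the bookkeeping that separates the singular part $\frac{1}{\partial f/\partial x_1}\sum_i c_i\xi_i|_S$ from the regular remainder $\alpha\in\Omega_{X,O}^{n-2}|_S$: three independent sources (the holomorphic $\eta_0$, the term $\xi_0|_S$, and the residue of the trivial contribution from $v_0$) each produce pieces that must be certified to land in $\Omega_{X,O}^{n-2}|_S$ before they can be absorbed into $\alpha$. A secondary issue is the genericity hypothesis on $(x_1,\ldots,x_n)$ needed to invoke Proposition~\ref{pro4}; this is already part of the hypothesis of Theorem~\ref{Th6}, and the same coordinate choice serves uniformly for all $v_1,\ldots,v_\tau$.
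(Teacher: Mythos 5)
Your proposal is correct and follows essentially the same route the paper intends: Theorem~\ref{Th6} is presented there without a separate proof, as the direct assembly of Aleksandrov's isomorphism ${\rm res}\big(\Omega_X^{n-1}(\log S)\big)\cong\omega_S^{n-2}$, the exact sequence identifying $\Omega_{X,O}^{n-1}(\log S)$ modulo $\frac{{\rm d}f}{f}\wedge\Omega_{X,O}^{n-2}+\Omega_{X,O}^{n-1}$ with ${\rm Tor}\big(\Omega_S^{n-1}\big)$, the isomorphism $\Theta$, and the residue computation of Proposition~\ref{pro4}. Your bookkeeping of the three regular contributions (the holomorphic summand, the term $\xi_0|_S$, and the trivial vector field's residue) all landing in $\left.\Omega_X^{n-2}\right|_S$ is exactly what justifies absorbing them into $\alpha$.
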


\section{Examples}\label{sec4}

In this section, we give examples of computation for illustration. Data is an extraction from~\cite{TN20}. Let $f_0(z,x,y)=x^3+y^3+z^4 $ and let $ f_t(z,x,y)=f_0(z,x,y)+txyz^2$,
where $t$ is a deformation parameter. We regard $z$ as the first variable. Then, $f_0$ is a weighted homogeneous polynomial with respect to a weight vector $(3,4,4)$ and $f_t$ is a $\mu$-constant deformation of $f_0$, called $U_{12}$ singularity. The Milnor number
$\mu(f_t)$ of $U_{12}$ singularity is equal to 12. In contrast, the Tjurina number
$\tau(f_t)$ depends on the parameter $t$. In fact, if $t=0$, then $\tau(f_0) = 12$ and if $t \ne 0$, then $\tau(f_t) =11$. In the computation, we fix a term order $\succ^{-1}$ on ${\mathcal O}_{X,O}$ which is compatible with the weight vector $(3,4,4)$.

We consider these two cases separately.

\begin{Example}[weighted homogeneous $ U_{12} $ singularity]
Let $ f_0(z,x,y) = x^3+y^3+z^4$. Then, $ \mu(f_0)=\tau(f_0)=12$.
The monomial basis $ {\rm M} $ with respect to the term ordering $ \succ^{-1} $ of the quotient space
$ {\mathcal O}_{X,O}/(f_0, \frac{\partial f_0}{\partial x}, \frac{\partial f_0}{\partial y}) $ is
\begin{gather*}
{\rm M} = \big\{ x^i y^j z^k \,|\, i=0,1, \ j=0,1, \ k=0,1,2,3 \big\}.
\end{gather*}
The standard basis $ {\rm Sb}$ of the ideal quotient
$\big(f_0, \frac{\partial f_0}{\partial x}, \frac{\partial f_0}{\partial y}\big): \big(\frac{\partial f_0}{\partial z}\big)$ is
\begin{gather*}
{\rm Sb} = \big\{ x^2, y^2, z \big\}.
\end{gather*}
The normal form in ${\mathcal O}_{X,O}/\big(f_0, \frac{\partial f_0}{\partial x}, \frac{\partial f_0}{\partial y}\big)$ of $x^2$, $y^2$ and $z$ are
\begin{gather*}
{\rm NF}_{\succ^{-1}}\big(x^2\big) = {\rm NF}_{\succ^{-1}}\big(y^2\big) = 0,\qquad
 {\rm NF}_{\succ^{-1}}(z) = z.
\end{gather*}
Therefore,
$ {\rm A} = \{x^iy^jz^k \,|\, i=0,1, \ j=0,1, \ k=1,2,3 \}$. Notice that ${\rm A}$ consists of $12$ elements. It is easy to see that the Euler vector field
\begin{gather*}
v= 4x\frac{\partial}{\partial x} + 4y\frac{\partial}{\partial y} +3z\frac{\partial}{\partial z}
\end{gather*}
that corresponds to the element $ z \in {\rm A} $ is a non-trivial logarithmic vector field. Therefore, the torsion module of the hypersurface
$ S_0 = \big\{ (x, y,z) \,|\, x^3+y^3+z^4=0 \big\} $ is given by
\begin{gather*}
{\rm Tor}\big(\Omega_{S_0}^2\big) = \big\{ x^iy^jz^k i_v(\omega_X) \,|\, i=0,1, \ j=0,1, \ k=1,2,3 \big\},
\end{gather*}
where $ \omega_X = {\rm d}z \wedge {\rm d}x \wedge {\rm d}y$.

Let $ \xi = -4x{\rm d}y+4y{\rm d}x$. Then $\left. {\rm res}\big(\frac{i_v(\omega_X)}{f}\big) = \frac{\xi}{4z^3}\middle|\raisebox{-0.7ex}[1ex][-2ex]{{\hspace{-1mm}\;$_S$}}\right.$. Computation of other logarithmic residues are same.
\end{Example}

The following is also an extraction from~\cite{TN20}.

\begin{Example}[semi quasi-homogeneous $U_{12}$ singularity]
Let $f(x,y,z) = x^3+y^3+z^4+txyz^2$, $t\ne 0$. Then, $\mu(f)=12$, $\tau(f)=11$ and $\mu(f|H_{z})=4$.
We have $\dim_{{\mathbb C}}H_{\Gamma(f)} = 16$, $\dim_{{\mathbb C}}H_{\Delta(f)}=5$.
Let $\succ$ be a term ordering on $H_{[O]}^{3}({\mathcal O}_X)$ which is compatible with
the weight vector $(4,4,3)$.

A basis $\Phi_{{\Gamma(f)}}$ of $H_{{\Gamma(f)}}$ is given by
\begin{gather*}
\left\{
\begin{bmatrix} 1 \\ x y z \end{bmatrix},
\begin{bmatrix} 1 \\ x y z^{2} \end{bmatrix},
\begin{bmatrix} 1 \\ x^2 y z \end{bmatrix},
\begin{bmatrix} 1 \\ x y^2 z \end{bmatrix},
\begin{bmatrix} 1 \\ x y z^{3} \end{bmatrix},
\begin{bmatrix} 1 \\ x^{2} y z^{2} \end{bmatrix},
\begin{bmatrix} 1 \\ x y^{2} z^{2} \end{bmatrix},
\begin{bmatrix} 1 \\ x^{2} y^{2} z \end{bmatrix},
\begin{bmatrix} 1 \\ x y z^{4}\end{bmatrix},\right.
\\
\hphantom{\left\{\right.}
\begin{bmatrix} 1 \\ x^{2} y z^{3} \end{bmatrix} -\dfrac{t}{3}
\begin{bmatrix} 1 \\ x y^{3} z\end{bmatrix},
\begin{bmatrix} 1 \\ x y^{2} z^{3} \end{bmatrix} -\dfrac{t}{3}
\begin{bmatrix} 1 \\ x~3 y z \end{bmatrix},
\begin{bmatrix} 1 \\ x^{2} y^{2} z^{2}\end{bmatrix},
\begin{bmatrix} 1 \\ x^{2} y z^{4}\end{bmatrix}-\dfrac{t}{3}
\begin{bmatrix} 1 \\ x y^{3} z^{2}\end{bmatrix},
\\
\hphantom{\left\{\right.}
\begin{bmatrix} 1 \\ x y^{2} z^{4}\end{bmatrix} -\dfrac{t}{3}
\begin{bmatrix} 1 \\ x^{3} y z^{2}\end{bmatrix},
\begin{bmatrix} 1 \\ x^{2} y^{2} z^{3}\end{bmatrix} -\dfrac{t}{3}
\begin{bmatrix} 1 \\ x^{4} y z\end{bmatrix} -\dfrac{t}{3}
\begin{bmatrix} 1 \\ x y^{4} z\end{bmatrix} -\dfrac{t}{3}
\begin{bmatrix} 1 \\ x y z^{5}\end{bmatrix},
\\
\hphantom{\left\{\right.}
\left.
\begin{bmatrix} 1 \\ x^{2} y^{2} z^{4}\end{bmatrix} -\dfrac{t}{3}
\begin{bmatrix} 1 \\ x^{4} y z^{2} \end{bmatrix} -\dfrac{t}{3}
\begin{bmatrix} 1 \\ x y^{4} z^{2}\end{bmatrix}-\dfrac{t}{3}
\begin{bmatrix} 1 \\ x y z^{6}\end{bmatrix}\right\}.
\end{gather*}

The monomial basis ${\rm M} $ with respect to the term ordering $\succ^{-1}$ of the quotient
${\mathcal O}_{X,O}/\big(f,$ $\frac{\partial f}{\partial x}, \frac{\partial f}{\partial y}\big)$ is
\begin{gather*}
{\rm M} = \big\{ x^i y^j z^k \,|\, i=0,1, \ j=0,1, \ k=0,1,2,3 \big\}.
\end{gather*}

A basis $ \Phi_{\Delta(f)} $ of $ H_{\Delta(f)} $ is given by
\[
\left\{
\begin{bmatrix} 1 \\ x y z \end{bmatrix},
\begin{bmatrix} 1 \\ x y z^{2}\end{bmatrix},
\begin{bmatrix} 1 \\ x^2 y z \end{bmatrix},
\begin{bmatrix} 1 \\ x y^2 z\end{bmatrix},
\begin{bmatrix} 1 \\ x^{2} y^{2} z\end{bmatrix}+ \dfrac{t}{6}
\begin{bmatrix} 1 \\ x y z^{3}\end{bmatrix} \right\}.
\]

We see from this data that the standard basis of the ideal quotient $\big(f, \frac{\partial f}{\partial x}, \frac{\partial f}{\partial y}\big) : \big(\frac{\partial f}{\partial z}\big)$
in the local ring ${\mathcal O}_{X,O}$ is
\begin{gather*}
{\rm Sb} = \left\{z^2-\dfrac{t}{6}xy, xz, yz, x^2, y^2\right\}.
\end{gather*}
From $ {\rm Sb} $ and $ {\rm M} $, we have
\begin{gather*}
{\rm A} =\left\{z^2-\dfrac{t}{6}xy, xz, yz, z^3, xz^2, yz^2, xyz, xz^3, yz^3, xyz^2, xyz^3\right\}.
\end{gather*}

These 11 elements in $ {\rm A} $ are used to construct non-trivial logarithmic vector fields and regular meromorphic differential forms.
We give the results of computation.
\begin{enumerate}\itemsep=0pt
\item[$(i)$] Let $ a=6z^2-txy$. Then,
\begin{gather*}
v=\frac{d_1}{27+t^3z^2}\frac{\partial}{\partial x} +\frac{d_2}{27+t^3z^2}\frac{\partial}{\partial y} +\big(6z^2-txy\big)\frac{\partial}{\partial z}
\end{gather*}
is a non-trivial logarithmic vector field, where
\begin{gather*}
d_1= 216xz-6t^2y^2z-2t^4x^2yz, \qquad
d_2=216yz+24t^2x^2z+10t^3yz^3-2t^4xy^2z.
\end{gather*}

\item[$(ii)$] Let $ a=xz$. Then,
\begin{gather*}
v=\frac{d_1}{27+t^3z^2}\frac{\partial}{\partial x} +\frac{d_2}{27+t^3z^2}\frac{\partial}{\partial y} +xz\frac{\partial}{\partial z}
\end{gather*}
is a non-trivial logarithmic vector field, where
\begin{gather*}
d_1=36x^2-6yz^2-6t^2xy^2, \qquad d_2=36xy+2t^2x^3-4t^2y^3-2t^2z^4.
\end{gather*}
\end{enumerate}
We omit the other nine cases. As described in Theorem~\ref{Th6}, regular meromorphic differential forms can be constructed directly from these data.
\end{Example}

\section{Brieskorn formula}\label{sec5}

In 1970, B.~Brieskorn studied the monodromy of Milnor fibration and developed the theory of~Gauss--Manin connection~\cite{Br}. He proved the regularity of the connection and proposed an~alge\-braic framework for computing the monodromy via Gauss--Manin connection. He gave in~particular a basic formula, now called Brieskorn formula, for computing Gauss--Manin connection.

We show in this section a link between Brieskorn formula, torsion differential forms and logarithmic vector fields. We present an alternative method for computing non-trivial logarithmic vector fields. The resulting algorithm can be used as a basic tool for studying Gauss--Manin connections. We also present some examples for illustration.

\subsection{Brieskorn lattice and Gauss--Manin connection}

We briefly recall some basics on Brieskorn lattice and Brieskorn formula. We refer to~\cite{BS, Br, Schu}.
Let $f(x) $ be a holomorphic function on $X$ with an isolated singularity at the origin $ O \in X, $ where $X$ is an open neighborhood of $ O $ in $ {\mathbb C}^n$.
Let
\begin{gather*}
H_{0}^{\prime} = \Omega_{X, O}^{n-1}/\big({\rm d}f\wedge \Omega_{X,O}^{n-2} + {\rm d} \Omega_{X,O}^{n-2}\big), \qquad H_{0}^{\prime\prime}=\Omega_{X,O}^{n}/{\rm d}f\wedge {\rm d}\Omega_{X,O}^{n-2}.
\end{gather*}

Then, ${\rm d}f\wedge H_{0}^{\prime} \subset H_{0}^{\prime\prime}$.
A map $D\colon {\rm d}f\wedge H_{0}^{\prime} \longrightarrow H_{0}^{\prime\prime} $ is defined as follows:
\begin{gather*}
D({\rm d}f\wedge \varphi) = [{\rm d}\varphi], \qquad \varphi \in \Omega_{X,O}^{n-1}.
\end{gather*}

Let $ \varphi= \sum_{i=1}^{n} (-1)^{i+1}h_i(x){\rm d}x_1 \wedge {\rm d}x_2 \wedge \cdots \wedge {\rm d}x_{i-1} \wedge {\rm d}x_{i+1} \wedge \cdots \wedge {\rm d}x_n$. Then
\begin{gather*}
{\rm d}f\wedge \varphi = \Bigg(\sum_{i=1}^{n} h_i(x)\frac{\partial f}{\partial x_i}\Bigg) \omega_X,
\end{gather*}
where $ \omega_X={\rm d}x_1\wedge {\rm d}x_2 \wedge \cdots \wedge {\rm d}x_n$. Therefore in terms of the coordinate we have the following, known as Brieskorn formula
\begin{gather*}
D({\rm d}f\wedge \varphi) = \Bigg(\sum_{i}^{n} \frac{\partial h_i}{\partial x_i}\Bigg)\omega_X.
\end{gather*}

\begin{Example}
 Let $f(x,y)=x^2-y^3 $ and $ S=\{ (x,y) \in X \,|\, f(x,y)=0 \}$ where $X \subset {\mathbb C}^2$ is an~open neighborhood of the origin $O$. The Jacobi ideal $J$ of $f$ is $\big(x, y^2\big) \subset {\mathcal O}_{X,O}$ and $M=\{1, y\}$ is~a~monomial basis of the quotient
${\mathcal O}_{X,O}/J$. Let $\tau$ denote the Tjurina number. Then, since $f$ is a~weighted homogeneous polynomial, we have $\tau=\mu=2$ (see Example~\ref{Ex1}).

Let $v=\frac{1}{6}\big(3x\frac{\partial}{\partial x}+2y\frac{\partial}{\partial y}\big) $ be the Euler vector field. Then, $v$ is logarithmic along $ S$.

Let $\beta=i_v(\omega_X)$. Then, $ \beta=\frac{1}{6}(3x{\rm d}y-2y{\rm d}x)$.
Since $v(f)=f$, we have $ {\rm d}f\wedge \beta = f\omega_X, $ where $ \omega_X={\rm d}x \wedge {\rm d}y$.
By Brieskorn formula, we have
\begin{gather*}
D(f\omega_X) = D({\rm d}f\wedge \beta) =\dfrac{5}{6}\omega_X.
\end{gather*}

Note that the formula above is equivalent $ {\rm d} \big(\frac{\beta}{f^{\lambda}}\big) =0$, with $\lambda = \frac{5}{6}$.

Likewise, for $ y\beta$, we have $ {\rm d}f\wedge (y\beta) = f(x,y)y\omega_X $ and
\begin{gather*}
D(f(x,y)y\omega_X) = D({\rm d}f\wedge (y\beta)) = \dfrac{7}{6}y\omega_X,
\end{gather*}
which is equivalent to
${\rm d} \big(\frac{y\beta}{f^{\lambda}}\big) =0$, with $\lambda = \frac{7}{6}$.

Since $Df=fD+1$ as operators, we have
\[
fD(\omega_X)=-\dfrac{1}{6}\omega_X, \qquad
fD(y\omega_X)=\dfrac{1}{6}y\omega_X.
\]
Notice that $\beta$, $y\beta$ are non-zero torsion differential forms in $ \Omega_S^{1} $ and $v$, $yv$ are non-trivial logarithmic vector fields along $S$. Note also that $yv(f)=yf$.
Notably, Brieskorn formula described in terms of differential forms can be rewritten in terms of non-trivial logarithmic vector fields $v$ and $yv$ which satisfy $v(f)=f$ and $yv(f)=yf$ respectively.
\end{Example}

Let $S=\{x \in X \,|\, f(x)=0 \}$ be the hypersurface with an isolated singularity at the origin $O \in X$ defined by $f$.
Consider, for instance, a trivial vector field $v^{\prime}=\frac{\partial f}{\partial x_2}\frac{\partial}{\partial x_1}-\frac{\partial f}{\partial x_1}\frac{\partial}{\partial x_2}$. Since $v^{\prime}(f)=0$ and $\frac{\partial}{\partial x_1}\big(\frac{\partial f}{\partial x_2}\big)+\frac{\partial}{\partial x_2}\big({-}\frac{\partial f}{\partial x_1}\big) =0$ hold, we have a trivial relation $ D((0 \cdot \omega_X) = 0 \cdot \omega_X$.
It~is easy to see in general that, from a trivial vector field Brieskorn formula only gives the trivial relation.

The observation above leads the following.

\begin{Proposition}\label{pro5}
Let $ S=\{x \in X \,|\, f(x)=0 \} $ be a hypersurface with an isolated singularity at the origin $O \in X, $ where $ X \subset {\mathbb C}^n$. Let
\begin{gather*}
v=a_1(x)\frac{\partial}{\partial x_1} + a_2(x)\frac{\partial}{\partial x_2}+ \cdots +a_n(x)\frac{\partial}{\partial x_n}
\end{gather*}
be a germ of non-trivial logarithmic vector field along $S$.
 Let
$ v(f) = b(x)f(x) $
Then,
\begin{gather*}
D(f(x)b(x)\omega_X) = \Bigg( \sum_{i=1}^{n} \frac{\partial a_i}{\partial x_i} \Bigg) \omega_X
\end{gather*}
holds, where $ \omega_X={\rm d}x_1 \wedge {\rm d}x_2 \wedge \cdots \wedge {\rm d}x_n$.
\end{Proposition}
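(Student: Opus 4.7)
The plan is to apply the Brieskorn formula recalled just above the statement, with the cleverly chosen test form $\varphi = i_v(\omega_X)$. Concretely, I would first expand $\beta := i_v(\omega_X)$ in coordinates:
\begin{gather*}
\beta = \sum_{i=1}^{n} (-1)^{i+1} a_i(x)\, {\rm d}x_1 \wedge \cdots \wedge {\rm d}x_{i-1} \wedge {\rm d}x_{i+1} \wedge \cdots \wedge {\rm d}x_n,
\end{gather*}
so that $\beta$ is exactly of the form required by Brieskorn's formula, with $h_i = a_i$.

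Next I would compute $ {\rm d}f \wedge \beta $ in two ways. On the one hand, using the identity $ {\rm d}f \wedge i_v(\omega_X) = v(f)\, \omega_X $ (already used in the proof of Proposition 3.1 of the paper), and the logarithmic assumption $ v(f) = b(x) f(x) $, we get
\begin{gather*}
 {\rm d}f \wedge \beta = b(x) f(x)\, \omega_X.
\end{gather*}
On the other hand, by Brieskorn's formula applied to $\varphi = \beta$,
\begin{gather*}
D( {\rm d}f \wedge \beta ) = \Bigg(\sum_{i=1}^{n} \frac{\partial a_i}{\partial x_i}\Bigg)\, \omega_X.
\end{gather*}
Combining the two equalities yields exactly the claimed identity
$ D(f(x)b(x)\omega_X) = \big(\sum_{i} \partial a_i/\partial x_i\big)\, \omega_X $.

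There is essentially no obstacle: once the correct $\varphi$ is chosen, the statement reduces to bookkeeping between the coordinate form of Brieskorn's operator $D$ and the inner product $i_v$. The only mild subtlety is to check that $D$ is well defined on the class $[{\rm d}f \wedge \beta] \in {\rm d}f \wedge H_0^{\prime}$, i.e.\ that the right-hand side in $H_0^{\prime\prime}$ is independent of the representative $\beta$ of its class in $H_0^{\prime}$; but this is precisely the content of the definition of $D$ recalled above, so no further work is needed. Thus the proposition is a direct translation of Brieskorn's formula under the substitution $\varphi \leftrightarrow i_v(\omega_X)$, and it is this substitution that provides the announced bridge between logarithmic vector fields and Brieskorn's computation of the Gauss--Manin connection.
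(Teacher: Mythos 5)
Your proposal is correct and coincides with the paper's own proof: both take $\varphi=\beta=i_v(\omega_X)$, identify the coefficients $h_i$ with $a_i$, use ${\rm d}f\wedge\beta=v(f)\,\omega_X=b(x)f(x)\,\omega_X$, and then invoke Brieskorn's formula. The sign bookkeeping for $i_v(\omega_X)$ is handled correctly, so nothing further is needed.
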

\begin{proof}
Let $ \beta=i_v(\omega_X)$.
Since $ {\rm d}f\wedge \beta = v(f) \omega_X, $ we have $ {\rm d}f\wedge \beta = \left( \sum_{i=1}^{n} a_i(x)\frac{\partial f}{\partial x_i} \right)\omega_X$. Since $ v(f) =b(x)f(x), $ Brieskorn formula implies the result.
\end{proof}


Notice that the action of $Df $ on $ b(x)\omega_X $ in the formula above is completely written in terms of non-trivial logarithmic vector field
$v$ such that $ v(f)=b(x)f$.
To the best of our knowledge, this simple observation has not been explicitly stated in literature on Gauss--Manin connections.


Now we present an alternative method for computing the module of germs of non-trivial logarithmic vector fields.
\begin{enumerate}\itemsep=0pt
\setlength{\leftskip}{0.45cm}
\item[{\it Step} 1:] Compute a monomial basis ${\rm M} $ of the quotient space
\begin{gather*}
{\mathcal O}_{X,O}\Big/\bigg(\frac{\partial f}{\partial x_1}, \frac{\partial f}{\partial x_2}, \dots, \frac{\partial f}{\partial x_n}\bigg).
\end{gather*}

\item[{\it Step} 2:] Compute a standard basis ${\rm Sb}$ of the ideal quotient
\begin{gather*}
\bigg(\frac{\partial f}{\partial x_1}, \frac{\partial f}{\partial x_2}, \dots, \frac{\partial f}{\partial x_n}\bigg) : (f).
\end{gather*}
\item[{\it Step} 3:] Compute a basis $ {\rm B} $ of the vector space by using $ {\rm Sb} $ and $ {\rm M} $
\begin{gather*}
\bigg(\bigg(\frac{\partial f}{\partial x_1}, \frac{\partial f}{\partial x_2}, \dots, \frac{\partial f}{\partial x_n}\bigg) : (f)\bigg)\Big/
\bigg(\frac{\partial f}{\partial x_1}, \frac{\partial f}{\partial x_2}, \dots, \frac{\partial f}{\partial x_n}\bigg).
\end{gather*}
\item[{\it Step} 4:] For each $ b(x) \in {\rm B}, $ compute a logarithmic vector field along $S$ such that
\begin{gather*}
v(f)=b(x)f(x).
\end{gather*}
\end{enumerate}

The method above computes a basis of non-trivial logarithmic vector fields.
Each step can be effectively executable, as in~\cite{TN20}, by utilizing algorithms described in~\cite{NT16a,NT16b,NT17a,TNN}.

Note that, the number of non-trivial logarithmic vector fields in the output is equals to the Tjurina number $ \tau(f)$. See also~\cite{M}.

 Let
\begin{gather*}
v=a_1(x)\frac{\partial}{\partial x_1} + a_2(x)\frac{\partial}{\partial x_2}+ \cdots +a_n(x)\frac{\partial}{\partial x_n}
\end{gather*}
be a germ of non-trivial logarithmic vector field along $S, $ such that $ v(f) = b(x)f(x)$. Then from Proposition~\ref{pro5},
we have
\begin{gather*}
D(f(x)b(x)\omega_X) = \left( \sum_{i=1}^{n} \frac{\partial a_i}{\partial x_i} \right) \omega_X.
\end{gather*}
Therefore, the proposed method can be used as a basic procedure for computing a connection matrix of Gauss--Manin connection.

One of the advantages of the proposed method lies in the fact that the resulting algorithm also can handle parametric cases.

\subsection{Examples}

Let us recall that $ x^3+y^7+txy^5 $ is the standard normal form of semi quasi-homogeneous $E_{12}$ singularity. The weight vector is $(7,3)$ and the weighted degree of the quasi-homogeneous part is equal to $21$ and the weighted degree of the upper monomial $ txy^5 $ is equal to $22$. We examine here, by contrast, the case where the weighted degree of an upper monomial is bigger than $ 22.$

\begin{Example}
Let $ f(x,y)=x^3+y^7+txy^6, $ where $t$ is a parameter. Notice that the polyno\-mial~$f$ is not weighted homogeneous. The weighted degree of the upper monomial $ txy^6 $ is equal to $25$, which is bigger than that of $ txy^5$. Accordingly $f$ is a quasi homogeneous function. The Milnor number $\mu$ is equal to~$12$.

Let $H_J $ denote the set of local cohomology classes in $ H_{[0,0]}^{2}({\mathcal O}_{X}) $ that are killed by the Jacobi ideal $J=\big (\frac{\partial f}{\partial x}, \frac{\partial f}{\partial y}\big)$:
\begin{gather*}
H_J = \left\{ \psi \in H_{[0,0]}^{2}({\mathcal O}_{X})\, \middle| \,\frac{\partial f}{\partial x}\psi = \frac{\partial f}{\partial y}\psi = 0 \right\}.
\end{gather*}
Then, by using an algorithm given in~\cite{NT17a,TNN}, a basis as a vector space of $H_J$ is computed as
\begin{gather*}
\begin{bmatrix} 1 \\ x y \end{bmatrix},
\begin{bmatrix} 1 \\ x y^2 \end{bmatrix},
\begin{bmatrix} 1 \\ x y^3 \end{bmatrix},
\begin{bmatrix} 1 \\ x^2 y \end{bmatrix},
\begin{bmatrix} 1 \\ x y^4 \end{bmatrix},
\begin{bmatrix} 1 \\ x^2 y^2 \end{bmatrix},
\begin{bmatrix} 1 \\ x y^5 \end{bmatrix},
\begin{bmatrix} 1 \\ x^2 y^3 \end{bmatrix},
\begin{bmatrix} 1 \\ x y^6 \end{bmatrix},
\begin{bmatrix} 1 \\ x ^2 y^4 \end{bmatrix},
\begin{bmatrix} 1 \\ x^2 y^5 \end{bmatrix},
\\
\begin{bmatrix} 1 \\ x^2 y^6 \end{bmatrix} -\dfrac{6}{7}t
\begin{bmatrix} 1 \\ x y^7 \end{bmatrix}+\dfrac{2}{7}t^2
\begin{bmatrix} 1 \\ x^3 y \end{bmatrix},
\end{gather*}
where $[\ ] $ stands for Grothendieck symbol.

It is easy to see that every local cohomology classes in $H_J$ is killed by $f$, that is $ f\cdot \varphi =0$, $\forall\, \varphi \in H_J$. Therefore, $f$ is in the ideal $\big(\frac{\partial f}{\partial x}, \frac{\partial f}{\partial y}\big) \subset {\mathcal O}_{X,O}$.

Therefore, by a classical result of K.~Saito~\cite{S71}, $f$ is in fact quasi-homogeneous. The Tjurina number $\tau$ is equal to the Milnor number $\mu=12$.
A monomial basis ${\rm M}$ of $ {\mathcal O}_{X,O}/\big(\frac{\partial f}{\partial x}, \frac{\partial f}{\partial y}\big) $ is
\begin{gather*}
{\rm M} = \big\{1, y, y^2, x, y^3, xy, y^4, xy^2, y^5, xy^3, xy^4, xy^5 \big\}.
\end{gather*}
Since a standard basis ${\rm Sb} $ of $\big(\frac{\partial f}{\partial x}, \frac{\partial f}{\partial y}\big) : (f)$ is $\{1\},$ a basis ${\rm B}$ of the vector space
$ \big(\big(\frac{\partial f}{\partial x}, \frac{\partial f}{\partial y}\big) : (f)\big)/ \big(\frac{\partial f}{\partial x}, \frac{\partial f}{\partial y}\big)$ is
equal to ${\rm M}$ that consists of $\tau=12$ elements.

By using an algorithm given in~\cite{NT16b}, we compute a logarithmic vector field which plays the role of Euler vector field.
The result of computation is the following:
\begin{gather*}
v= \frac{d_1}{3\big(49+12t^3y^4\big)}\frac{\partial}{\partial x} + \frac{d_2}{3\big(49+12t^3y^4\big)}\frac{\partial}{\partial y},
\end{gather*}
where
\begin{gather*}
d_1=49x+8t^2y^5+12t^3xy^4, \qquad d_2=21y-4tx+4t^3y^5.
\end{gather*}
The vector field $ v$ enjoys $ v(f)=f$. Note also that for the case $ t=0, $ we have
\begin{gather*}
v= \frac{1}{21}\bigg(7x\frac{\partial}{\partial x} + 3y\frac{\partial}{\partial y}\bigg).
\end{gather*}

We emphasize here the fact that, the algorithm in~\cite{NT19a} for computing logarithmic vector fields can handle parametric cases.
Since $v(f)=f$ holds, the other non-trivial logarithmic vector fields can be obtained from~$v$.
In fact, for $x^i y^j \in M$, we have $x^i y^j v(f) = x^i y^j f$.

Therefore, thanks to Brieskorn formula, Gauss--Manin connection can be determined expli\-ci\-tly by using these non-trivial logarithmic vector fields,
\end{Example}

\begin{Remark}Recall that, according to Grothendieck local duality theorem, the vector space~$H_J$ can be regarded as a dual space to ${\mathcal O}_{X,O}/J$. Since these local cohomology classes given above constitute a~dual basis of the monomial basis~M of the quotient space $ {\mathcal O}_{X,O}/J$, the normal form of a holomorphic function w.r.t.~${\mathcal O}_{X,O}/J$ can be computed by using the basis of~$H_J$
in an efficient manner, without using division algorithms~\cite{TNN}.

Therefore the use of local cohomology classes in reduction steps allows us to design an effective procedure for computing the connection matrix of Gauss--Manin connection.
\end{Remark}

J.~Scherk studied in~\cite{Sch} the following case.

\begin{Example}
Let $f(x,y)=x^5+x^2y^2+y^5$. Then, the Milnor number $\mu(f)$ is equal to~11 and the Tjurina number $\tau(f)$ is equal to 10. A monomial basis ${\rm M}$ of ${\mathcal O}_{X,O}/\big(\frac{\partial f}{\partial x}, \frac{\partial f}{\partial y}\big)$ is~${\rm M} = \big\{1, x, x^2, x^3, x^4, x^5, xy, y, y^2, y^3, y^4\big\}$. A standard basis ${\rm Sb}$ of the ideal quotient
$\big(\frac{\partial f}{\partial x} \frac{\partial f}{\partial y}\big) : (f)$ is $\{x, y\}$. A basis ${\rm B}$ of the vector space $\big(\big(\frac{\partial f}{\partial x}, \frac{\partial f}{\partial y}\big) : (f)\big) / \big(\frac{\partial f}{\partial x}, \frac{\partial f}{\partial y}\big)$ is
\begin{gather*}
{\rm B} = \big\{ x, x^2, x^3, x^4, x^5, xy, y, y^2, y^3, y^4 \big\}.
\end{gather*}

Since $ {\rm Sb}\cap {\rm B} = \{ x, y \} $, we first compute non-trivial logarithmic vector fields associated to $ x $ and $y $.
\begin{enumerate}\itemsep=0pt
\item[$(i)$] For $b(x,y)=x$, we have
\begin{gather*}
v=\frac{d_1}{5(4-25xy)}\frac{\partial}{\partial x} + \frac{d_2}{5(4-25xy)}\frac{\partial}{\partial y},
\end{gather*}
where
$d_1=4x^2-25x^3y-5y^3$, $d_2=6xy-25x^2y^2$.

Since $v(f)=xf$, by a direct computation, we have for instance
\begin{gather*}
D(f(x, y)x\omega_X) = \left(\dfrac{7}{10}x-\dfrac{3\times25}{16}y^4\right)\omega_X \mod \left(\frac{\partial f}{\partial x}, \frac{\partial f}{\partial y}\right) .
\end{gather*}

Since $x^iv(f) =x^{i+1}f$, $i=1,2,3,4$ and $yv(f)=xyf$ hold, we can compute the action of~$Df$ on $x^{i+1}\omega_X$ and $xy\omega_X$
by using the vector field $v $ above.

\item[$(ii)$] For $b(x,y)=y$, we have
\begin{gather*}
v=\frac{d_1}{5(4-25xy)}\frac{\partial}{\partial x} + \frac{d_2}{5(4-25xy)}\frac{\partial}{\partial y},
\end{gather*}
where
$d_1=6xy-25x^2y^2$, $d_2=4y^2-25xy^3-5x^3$ and
\begin{gather*}
D(f(x,y)y\omega_X) = \left(\dfrac{7}{10}y-\dfrac{3\times25}{16}x^4\right)\omega_X \mod \left(\frac{\partial f}{\partial x}, \frac{\partial f}{\partial y}\right) .
\end{gather*}
\end{enumerate}

Since the vector field $v$ above satisfies $v(f) = yf$, we also have $y^jv(f) = y^{j+1}f$, $j=1,2,3$.

We can use these relations to compute the action of $Df$ on $y^{j+1} \omega_X$, $j=1,2,3$.
In this way, we obtain $\tau=10$ fundamental relations.

Since the Milnor number $\mu$ is equal to 11, these 10 relations are not enough to compute a~connection matrix of the Gauss--Manin connection.
We have to compute the saturation.

Now recall the classical result on integral closure due to J.~Brian\c on and H.~Skoda~\cite{BSkoda}.
From the Brian\c on--Skoda theorem, we see that the function $f^2$ is in the ideal
$J=\big(\frac{\partial f}{\partial x}, \frac{\partial f}{\partial y}\big)$.
In~\cite{Sch}, J.~Scherk computed the following relation explicitly and exploited it as the starting point for computing $D(f^2\omega_X)$ and $D(fD(f\omega_X))$:
\begin{gather*}
25(4-25xy)f^2 = \big\{\big(20x-125x^2y\big)f+4x^3y^2-5xy^5-25x^4y^3\big\}\frac{\partial f}{\partial x}
\\ \hphantom{25(4-25xy)f^2 = \{}{}
+ \big\{\big(20y-125xy^2\big)f+6x^2y^3-25x^3y^4\big\}\frac{\partial f}{\partial y}.
\end{gather*}

Here we propose a slightly different approach.
By using an algorithm given in~\cite{NT20}, we can compute the following integral dependence relation
\begin{gather*}
25(4-25xy)f^2 = 10x\bigg(\frac{\partial f}{\partial x}\bigg)f\!+\!10y\bigg(\frac{\partial f}{\partial y}\bigg)f\!+\!d_{2,0}\bigg(\frac{\partial f}{\partial x}\bigg)^2\!+\!d_{1,1}\bigg(\frac{\partial f}{\partial x}\bigg)\bigg(\frac{\partial f}{\partial y}\bigg)\!+\!d_{0,2}\bigg(\frac{\partial f}{\partial y}\bigg)^2,
\end{gather*}
where
\begin{gather*}
d_{2,0}=2x^2-25x^3y-10y^3, \qquad
d_{1,1}=11xy-50x^2y^2, \qquad
d_{0,2}=2y^2-25xy^3-10x^3.
\end{gather*}

Compare to the relation used by Scherk, the integral dependence relation given above represent much more precise relations between $f^2$, $f\big(\frac{\partial f}{\partial x}\big)$,
$f\big(\frac{\partial f}{\partial y}\big)$, $\big(\frac{\partial f}{\partial x}\big)^2$,
$\big(\frac{\partial f}{\partial x}\big)\big(\frac{\partial f}{\partial y}\big)$,
$\big(\frac{\partial f}{\partial y}\big)^2$. Thanks to this property,
the use of the integral dependence relation, or the integral equation leads an~effective method for computing $ D\big(f^2\omega_X\big) $ and $ D(fD(f\omega_X))$.
\end{Example}

Note that in~\cite{NT20}, we consider integral dependence relations in the context of symbolic computation and introduced a concept of generalized integral dependence relations. From this point of~view relations obtained from non-trivial logarithmic vector fields can be interpreted as genera\-lized integral dependence relations. These relations can also be computed by using the algorithms described in~\cite{NT20}.

Let $ f(x) $ be a holomorphic function defined on $X \subset {\mathbb C}^n$. Assume that the degree of integral equation, or the integral number of $f$ over the Jacobi ideal in the local ring ${\mathcal O}_{X,O}$ is equal to~two. Let
\[
f(x)^2 + \sum_{i=1}^{n} a_i(x)f(x)\frac{\partial f}{\partial x_i}(x)+ \sum_{j \ge i} a_{i,j}(x)
\frac{\partial f}{\partial x_i}(x)\frac{\partial f}{\partial x_j}(x) = 0
\]
be the integral equation of $f$. Then, from the Brieskorn formula, we have
\begin{align*}
D(f(x)^2\omega_X) &= -D\Bigg\{ \sum_{i=1}^{n}\Bigg(a_i(x)f(x) + \sum_{j \ge i} a_{i,j}(x)\frac{\partial f}{\partial x_j}(x)\Bigg)\frac{\partial f}{\partial x_i}(x) \omega_X \Bigg\}
\\
&=-\Bigg\{ \sum_{i=1}^{n}\frac{\partial}{\partial x_i} \Bigg(a_i(x)f(x) + \sum_{j \ge i}
 a_{i,j}(x)\frac{\partial f}{\partial x_j}(x)\Bigg) \Bigg\} \omega_X
\end{align*}
which is equal to
\[
-\Bigg\{ \sum_{i=1}^{n}a_i(x)\frac{\partial f}{\partial x_i}(x) +
 \sum_{j \ge i} \frac{\partial a_{i,j}}{\partial x_i}(x)\frac{\partial f} {\partial x_j} + R(x) \Bigg\} \omega_X,
 \]
where
\[
R(x) = \bigg( \sum_{i=1}^{n} \frac{\partial a_i}{\partial x_i}(x)\bigg)f(x) +
 \sum_{j \ge i} a_{i,j}(x)\frac{\partial^2 f}{\partial x_i \partial x_j} (x).
 \]

If, there exist holomorphic functions $c_i(x)$, $i=1,2,\ldots,n$ such that
\[
R(x) = \sum_{i=1}^{n} c_i(x)\frac{\partial f}{\partial x_i}(x),
\]
then, we have for instance the following relation that can be used as a starting point of the computation of a saturation
\[
D^2(f(x)^2\omega_X) = -\Bigg\{ \sum_{i=1}^{n} \bigg(\frac{\partial a_i}{\partial x_i}(x)
+ \frac{\partial c_i}{\partial x_i}(x) \bigg) + \sum_{ j \ge i}
\frac{\partial^2 a_{i,j}}{\partial x_j \partial x_i}(x) \Bigg\} \omega_X.
 \]

Computing Gauss--Manin connections is a quite difficult problem~\cite{D,Gu,Sch,Sch93,vS}. We~expect that the approach presented in this paper provides a method to reduce difficulty to some extent.

\subsection*{Acknowledgements}
{\sloppy
This work has been partly supported by JSPS Grant-in-Aid for Scientific Research (C) (18K03320 and 18K03214).

}

\pdfbookmark[1]{References}{ref}
\LastPageEnding

\end{document}